\renewcommand*\env@matrix[1][*\c@MaxMatrixCols c]{%
  \hskip -\arraycolsep
  \let\@ifnextchar\new@ifnextchar
  \array{#1}}
\title{Low-Rank Approximation and Completion of Positive Tensors\thanks{This work was supported in part by NSF grant CMMI-1450963.}} 
\author{Anil Aswani\thanks{Industrial Engineering and Operations Research, University of California, Berkeley, CA 94720 (aaswani@berkeley.edu).}}
\begin{document}
\maketitle
\slugger{siopt}{xxxx}{xx}{x}{x--x}

\begin{abstract}
Unlike the matrix case, computing low-rank approximations of tensors is NP-hard and numerically ill-posed in general. Even the best rank-1 approximation of a tensor is NP-hard.  In this paper, we use convex optimization to develop polynomial-time algorithms for low-rank approximation and completion of positive tensors.  Our approach is to use algebraic topology to define a new (numerically well-posed) decomposition for positive tensors, which we show is equivalent to the standard tensor decomposition in important cases.  Though computing this decomposition is a nonconvex optimization problem, we prove it can be exactly reformulated as a convex optimization problem.  This allows us to construct polynomial-time randomized algorithms for computing this decomposition and for solving low-rank tensor approximation problems.  Among the consequences is that best rank-1 approximations of positive tensors can be computed in polynomial time.  Our framework is next extended to the tensor completion problem, where noisy entries of a tensor are observed and then used to estimate missing entries.  We provide a polynomial-time algorithm that for specific cases requires a polynomial (in tensor order) number of measurements, in contrast to existing approaches that require an exponential number of measurements.  These algorithms are extended to exploit sparsity in the tensor to reduce the number of measurements needed.  We conclude by providing a novel interpretation of statistical regression problems with categorical variables as tensor completion problems, and numerical examples with synthetic data and data from a bioengineered metabolic network show the improved performance of our approach on this problem.
\end{abstract}

\begin{keywords}tensor completion, tensor approximation, categorical regression\end{keywords}

\begin{AMS}90C25, 62F12, 05E45, 60B20\end{AMS}

\pagestyle{myheadings}
\thispagestyle{plain}
\markboth{ANIL ASWANI}{APPROXIMATION AND COMPLETION OF POSITIVE TENSORS}

\section{Introduction}

Tensors generalize matrices by describing a multidimensional array of numbers.  More formally, a tensor $\psi$ of \emph{order} $p$ is given by $\psi \in \mathbb{R}^{r_1 \times \cdots \times r_p}$, where $r_i$ is the \emph{dimension} of the tensor in the $i$-th index, for $i = 1,\ldots,p$.  When we would like to refer to a specific entry in the tensor, we use the notation $\psi_x := \psi_{x_1,\ldots,x_p}$, where $x = (x_1,\ldots,x_p)$, $x_i \in [r_i]$ denotes the value of the $i$-th index, and $[s] := \{1,\ldots,s\}$.  Also let $r = \max_i r_i$.  The reasons for choosing this notation will become more clear when discussing our novel interpretation of statistical regression with categorical variables as tensor completion.

The similarity between tensors and matrices is misleading because many problems that are routine and polynomial-time computable for matrices are NP-hard for tensors.  For instance, it is NP-hard to compute the rank of a tensor \cite{hillar2013}, which is defined as the minimal number of rank-1 components needed to represent the tensor: $\text{rank}_\otimes(\psi) = \min \{q\ |\ \psi = \textstyle\sum_{j=1}^q v_1^j \otimes\cdots\otimes v_p^j, \text{ where } v_i^j \in \mathbb{R}^{r_i}\}$, where $\otimes$ is the \emph{tensor product} \cite{landsberg2012,hillar2013}.  Tensor analogs of the matrix singular value decomposition (e.g., \textsc{candecomp}/\textsc{parafac} or \textsc{cp}) are also NP-hard to compute \cite{hillar2013}.  Furthermore, determining the best low-rank approximations for tensors is an ill-posed problem in general \cite{desilva2008}, and computing the best rank-1 approximation is NP-hard in general \cite{hillar2013}.

In this paper, we attack the computational challenges posed by tensor problems by showing that positive tensors are amenable to polynomial-time algorithms with strong guarantees. A new tensor decomposition called a hierarchical decomposition is defined in \S \ref{section:hdpt} using a structure from algebraic topology.  This decomposition is shown to exist, be numerically well-posed, and coincide with the usual tensor \textsc{cp} decomposition \cite{kolda2009,hillar2013} in specific cases.  Section \ref{section:rad} develops a randomized algorithm to compute the hierarchical decomposition in polynomial time that depends on the degrees-of-freedom of the tensor rather than on the total number of tensor entries, which can be exponentially larger than the degrees-of-freedom.  This algorithm can compute a best rank-1 approximation of positive tensors in polynomial time.


Our approach differs from existing methods in a number of ways.  An iterative descent algorithm was proposed for decomposition of positive tensors in \cite{welling2001}, but no convergence guarantees were provided.  Matrix nuclear norm approaches are popular for tensor completion \cite{tomioka2010,signoretto2010,gandy2011,liu2013,mu2014,zhang2014}, though these approaches have exponentially slow statistical convergence.  Using the tensor nuclear norm  (which is NP-hard to compute \cite{friedland2014}) also gives exponentially slow statistical convergence \cite{yuan2015,yuan2016}.  For orthogonal third-order tensors (i.e., $p = 3$), alternating minimization provides polynomial statistical convergence \cite{jain2014}; unfortunately, this guarantee requires using a large number of randomized initializations, and so the results cannot be naturally generalized to higher order tensors without losing polynomial-time computability.  Recently, Lassere hierarchy approaches have been proposed for best rank-1 approximations \cite{nie2014} and tensor completion \cite{rauhut2015}.  These have found success on specific numerical examples, but conditions to guarantee global optimality are currently unavailable.

After presenting our algorithm for computing a hierarchical decomposition, \S \ref{section:tc} extends this framework to the problem of tensor completion \cite{tomioka2010,signoretto2010,gandy2011,liu2013,mu2014,zhang2014,montanari2014}, in which a subset of tensors entries are observed and then used to estimate missing entries.  We provide an algorithm that for specific cases requires a polynomial (in tensor order) number of measurements, which is much lower than the exponential number of measurements required by tensor completion methods using the matrix nuclear norm \cite{tomioka2010,signoretto2010,gandy2011,liu2013,mu2014,zhang2014}.  In the case of a rank-1 tensor, the number of needed measurements of our approach $O((rp^2)^{1+\zeta})$, for any $\zeta > 0$, is essentially a quadratic factor away from the information-theoretic lower bound of $O(rp)$.  Section \ref{section:shd} shows how the algorithms can be improved to exploit sparsity in the tensor.  Numerical examples with synthetic data in \S \ref {section:ne} show our approach outperforms other tensor completion algorithms.  We conclude by providing in \S \ref{section:rcv} a novel interpretation of statistical regression problems with categorical variables as tensor completion problems.  Data from a bioengineered metabolic network is used to show the improved performance of our approach for categorical regression.

\section{Hierarchical Decomposition of Positive Tensors}

\label{section:hdpt}

A structure from algebraic topology \cite{diaconis1998,drton2009} is used to parametrize our new decomposition.  Following the definition of \cite{drton2009}: A \emph{simplicial complex} is a set $\Gamma \subseteq 2^{[p]}$ such that $F \in \Gamma$ and $S \subset F$ implies that $S \in \Gamma$.  The elements of $\Gamma$ are called \emph{faces} of $\Gamma$ and the inclusion-maximal faces are the \emph{facets} of $\Gamma$.  We will assume the facets have been arbitrarily assigned an order, so that we can represent the simplicial complex as $\text{facets}(\Gamma) = \{F_1,F_2,\ldots,F_{m(\Gamma)}\}$, where $m(\Gamma)$ is the number of facets.  We will drop the argument notation $(\Gamma)$ when clear from the context.  Roughly speaking, a simplicial complex is a graph with higher-order connections between vertices.  Whereas edges in a graph can only connect two vertices, facets in a simplicial complex can simultaneously connect an arbitrary number of vertices.  An example of a simplicial complex with zero-, one-, two-, and three-dimensional facets is shown in Figure \ref{fig:sce}.

\begin{figure}
\centering
\includegraphics[trim = 0mm 18.3mm 0mm 17.8mm, clip, scale=0.5]{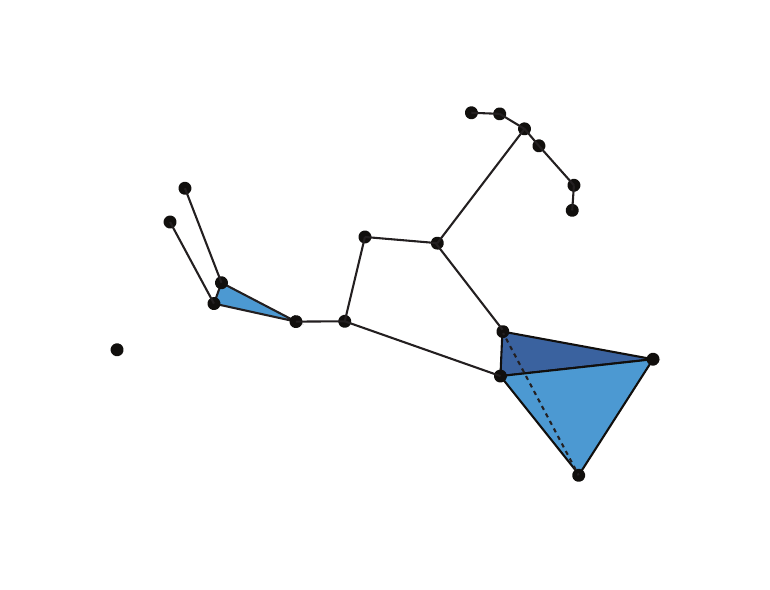}
\caption{\label{fig:sce}Example of a Simplicial Complex}
\end{figure}

This section begins with the definition of a hierarchical decomposition for positive tensors, and this decomposition is parametrized by a simplicial complex.  Hierarchical decompositions are shown to always exist and be numerically well-posed for positive tensors.  Next, we define an important special case of the hierarchical decomposition, which we call a partition decomposition.  This is used to provide instances in which these decompositions exactly coincide with the typical tensor \textsc{cp} decomposition.  The following notation (adapted from \cite{drton2009}) will be needed to write subindices.  Recall the set notation $[s] := \{1,\ldots,s\}$, and define $\mathcal{R} = [r_1]\times\cdots\times[r_p]$.  If $x = (x_1,\ldots,x_p) \in \mathcal{R}$ and $F = \{f_1,f_2,\ldots\} \subseteq [p]$, then $\mathcal{X}_F = (x_{f_1},x_{f_2},\ldots)$, and this vector has the state space $\mathcal{R}_F = [r_{f_1}]\times[r_{f_2}]\times\cdots$.  We use the notation $\mathcal{X}_k = \mathcal{X}_{F_k}$ and $\mathcal{R}_k = \mathcal{R}_{F_k}$ to reduce the number of indices in our equations.


\subsection{Definition}

Motivated by hierarchical log-linear models used in statistics to construct hypothesis tests for contingency tables \cite{drton2009}, we define a \emph{hierarchical decomposition} of a positive tensor to be
\begin{equation}
 \label{eqn:hd}
\psi_x = \prod_{k=1}^m\theta^{(k)}_{\mathcal{X}_k}
\end{equation}
where $\Gamma$ is a simplicial complex with $\text{facets}(\Gamma) = \{F_1,\ldots,F_m\}$, and $\theta^{(k)} \in \mathbb{R}^{r_{f_1}\times r_{f_2}\times\cdots}$ are constants indexed by the different values of $\mathcal{X}_k \in \mathcal{R}_k$.  When $\Gamma$ is such that (\ref{eqn:hd}) is satisfied, we say $\Gamma$ is \emph{correct}; on the other hand, if $\Gamma$ is such that (\ref{eqn:hd}) does not hold, then we say $\Gamma$ is \emph{incorrect}.  To simplify notation, we drop the superscript in $\theta^{(k)}_{\mathcal{X}_k}$ and write this as $\theta_{\mathcal{X}_k}$ when clear from the context.  Also, $\Theta = \{\theta^{(k)} : k = 1,\ldots,m\}$ refers to the set of all parameters.

\subsection{Existence and Representational Complexity}
\label{section:erc}

Existence (and well-posedness) of the hierarchical decomposition of a positive tensor can be shown under a mild boundedness assumption:\\

\noindent \textbf{A1}. The tensor is bounded $M^{-1} \leq \psi_x \leq M$ by some constant $M > 1$.\\

\noindent Our results generalize to the case $M_1 \leq \psi_x \leq M_2$, where $0 < M_1 < M_2$; we keep the above assumption to simplify stating the results.  Relaxing the lower bound to zero is more delicate: In practice, we can choose $M$ sufficiently large such that the lower bound is arbitrarily close to zero.  In theory, relaxing the lower bound to exactly zero requires additional analysis because the loss function we will use, though continuously differentiable, does not have a bounded derivative at zero. 


\begin{proposition}
If $\psi$ satisfies $\mathbf{A1}$, then a hierarchical decomposition of $\psi$ with a correct $\Gamma$ exists.
\end{proposition}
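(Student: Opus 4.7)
The plan is to observe that the definition of a hierarchical decomposition places no restriction on the size of the simplicial complex $\Gamma$, so existence can be established by exhibiting a trivial choice. Specifically, I would take $\Gamma = 2^{[p]}$, the full power set of $[p]$. This is a valid simplicial complex since it is by construction closed under inclusion, and its unique inclusion-maximal face is $F_1 = [p]$, so $m = 1$.

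With this choice, the index tuple $\mathcal{X}_1 = (x_{1},\ldots,x_{p})$ coincides with $x$, and $\theta^{(1)} \in \mathbb{R}^{r_1 \times \cdots \times r_p}$ is an order-$p$ tensor with the same shape as $\psi$. I would then simply define $\theta^{(1)}_x := \psi_x$ for every $x \in \mathcal{R}$. The decomposition equation (\ref{eqn:hd}) collapses to the tautology $\psi_x = \theta^{(1)}_x$, so $\Gamma$ is correct. By $\mathbf{A1}$, the entries $\theta^{(1)}_x$ lie in $[M^{-1}, M]$, so the decomposition is positive (and trivially bounded away from zero), which matches the setting of the proposition.

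The main obstacle, honestly, is essentially absent: the statement is a sanity check confirming that the class of correct simplicial complexes for $\psi$ is nonempty. Assumption $\mathbf{A1}$ itself is not required for producing a correct $\Gamma$; it is carried along because it will be essential in subsequent results (numerical well-posedness, the log-loss reformulation, and uniqueness-type claims), where it guarantees that the $\theta^{(k)}$ are strictly positive and bounded. The nontrivial mathematical content of the paper lies instead in finding correct $\Gamma$ with small facets, so as to obtain low representational complexity; this existence proposition merely certifies that the worst-case decomposition (with the single facet $[p]$) is always available as a fall-back.
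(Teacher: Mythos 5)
Your proposal is correct and is essentially identical to the paper's own proof: both choose the simplicial complex whose single facet is $F_1 = [p]$ and set $\theta^{(1)}_x := \psi_x$, making the decomposition a tautology. Your additional remarks on the role of $\mathbf{A1}$ are accurate but not needed for the argument.
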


\begin{proof}
The result follows by choosing a simplicial complex: $\text{facets}(\Gamma) = \{F_1\}$, where $F_1 = \{1,\ldots,p\}$, and then setting $\theta_{\mathcal{X}_1} := \psi_x$.$\qquad$
\end{proof}


Note there is a lack of uniqueness of the parametrizing $\Gamma$ because we can always choose a simplicial complex with a single facet, as in the above proof, to specify a valid hierarchical decomposition.  Because of this nonuniqueness, it is useful to define a notation of complexity.  We define the \emph{effective dimension} of a hierarchical decomposition for a specific choice of $\Gamma$ to be $\rho(\Gamma) = \sum_{k=1}^m \prod_{j \in F_k} r_j$.  The effective dimension is the number of coefficients used in the hierarchical decomposition of the tensor.  In many cases, a tensor of low rank can be represented by a hierarchical decomposition with low effective dimension.  Specific examples are given in the next subsection.  Moreover, a counting argument implies that the tensor rank must be upper bounded by the effective dimension: $\text{rank}_{\otimes}(\psi) \leq \rho$.  It is for these reasons we use low effective dimension as a surrogate for low tensor rank when we study the problems of tensor approximation and completion.

\subsection{Numerical Well-Posedness}

Beyond existence, hierarchical decompositions are also well-posed. One of the reasons that computing the best low-rank approximation of a tensor is an ill-posed problem in general \cite{desilva2008} is that though the entries of the tensor might be bounded, the coefficients of the tensor decomposition can be unbounded.  (This can occur because the unbounded nature of the coefficients cancel each other out.)  This leads to unique phenomenon such as having a sequence of tensors of rank two that converge to a tensor of rank three \cite{desilva2008,landsberg2012}.  Fortunately, the situation for nonnegative tensors is better because the approximation problem is well-posed \cite{lim2009,qi2014}.  As we show with the next proposition, the hierarchical decomposition is also well-posed in a particular way that will be important for formulating optimization problems.

\begin{proposition}
\label{proposition:well-posed}
If $\psi$ satisfies $\mathbf{A1}$ and $\Gamma$ is correct, then there exists $\Theta$ such that $M^{-2} \leq \theta_{\mathcal{X}_k} \leq M^{2}$, for all $\mathcal{X}_k \in \mathcal{R}_k$ and $k = 1,\ldots,m$.
\end{proposition}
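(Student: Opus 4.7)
The plan is to pass to the logarithmic domain and exploit the large non-uniqueness of the resulting additive decomposition to exhibit a representative with a small sup-norm.

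Because $\psi_x>0$ everywhere and $\Gamma$ is correct, the identity $\psi_x = \prod_k\theta^{(k)}_{\mathcal{X}_k}$ admits a \emph{positive} parametrization: a short sign-propagation along shared indices between facets clears any negative entries without changing the product. Setting $\phi^{(k)}:=\log\theta^{(k)}$ converts the decomposition into the additive identity $\log\psi_x = \sum_k\phi^{(k)}_{\mathcal{X}_k}$. Under $\mathbf{A1}$, $|\log\psi_x|\le\log M$, and the target $M^{-2}\le\theta^{(k)}_{\mathcal{X}_k}\le M^{2}$ is equivalent to the sup-norm bound $|\phi^{(k)}_{\mathcal{X}_k}|\le 2\log M$ that I must secure for some choice of the $\phi^{(k)}$.

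The additive decomposition has a large gauge group: for any $\{g^{(k)}\}$ with $\sum_k g^{(k)}_{\mathcal{X}_k}\equiv 0$, the shift $\phi^{(k)}\mapsto\phi^{(k)}+g^{(k)}$ preserves $\sum_k\phi^{(k)}_{\mathcal{X}_k}=\log\psi_x$. The plan is to exploit this freedom to select a bounded representative by an explicit anchored construction. Fix a reference point $x^* = (1,\ldots,1)\in\mathcal{R}$ and order the facets as $F_1,\ldots,F_m$. Then define each $\phi^{(k)}_{\mathcal{X}_k}$ as a \emph{single} difference $\log\psi_{y_1(k,\mathcal{X}_k)} - \log\psi_{y_2(k,\mathcal{X}_k)}$ of two log-tensor entries evaluated at ``axial'' points, where $y_1,y_2\in\mathcal{R}$ agree with $x^*$ outside a small distinguished subset of $F_k$ chosen so that the information already captured by $\phi^{(1)},\ldots,\phi^{(k-1)}$ is not double-counted. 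Since each axial log-entry lies in $[-\log M,\log M]$, the difference lies in $[-2\log M,2\log M]$, and exponentiating yields $\theta^{(k)}_{\mathcal{X}_k}=\exp\phi^{(k)}_{\mathcal{X}_k}\in[M^{-2},M^{2}]$ as required.

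The main obstacle will be verifying that this peeling construction reproduces $\log\psi_x$ pointwise on all of $\mathcal{R}$ rather than only at the axial points where it was anchored. By construction the identity is automatic whenever $x$ agrees with $x^*$ outside $F_1\cup\cdots\cup F_k$ for some $k$; extending it to arbitrary $x$ is exactly the M\"obius-inversion statement that a tensor is $\Gamma$-hierarchical if and only if its interaction coefficients (taken with respect to the anchor $x^*$) supported on non-faces of $\Gamma$ all vanish, which is precisely what correctness of $\Gamma$ supplies. Once that face-lattice bookkeeping is carried out, the construction produces the bounded $\Theta$ and completes the proof.
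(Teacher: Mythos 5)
Your overall strategy---build the parameters explicitly as ratios of tensor entries, so that each $\theta^{(k)}_{\mathcal{X}_k}$ is a quotient of two numbers lying in $[M^{-1},M]$ and hence lands in $[M^{-2},M^{2}]$---is the same mechanism as the paper's proof, which peels off one facet at a time via $\theta_{\mathcal{X}_j}=\psi_x/\prod_{k<j}\theta_{\mathcal{X}_k}$ and argues inductively that the partial products stay in $[M^{-1},M]$. For a \emph{partition} your anchored version closes cleanly: by M\"obius inversion relative to $x^*$, the sum of all interaction coefficients supported on subsets of $F_k$ telescopes to $\log\psi_{(\mathcal{X}_{F_k},\,x^*_{F_k^c})}$, so taking $\phi^{(k)}=\log\psi_{(\mathcal{X}_{F_k},\,x^*_{F_k^c})}-\log\psi_{x^*}$ gives a genuine two-entry difference for every facet, the global identity holds because the facets tile $[p]$, and the bound follows. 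Up to passing to logarithms, that is essentially the paper's construction.

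The gap is that the proposition is stated for an arbitrary correct simplicial complex, and once facets overlap your central claim---that \emph{every} $\phi^{(k)}$ can be taken to be a single difference of two log-entries---fails. Take $p=3$, $\mathrm{facets}(\Gamma)=\{\{1,2\},\{2,3\},\{1,3\}\}$, and write $L=\log\psi$. The faces $\{1\},\{2\},\{3\},\emptyset$ are each contained in two or three facets, so the telescoping sum over all subfaces of $F_k$ cannot be used for every $k$ without double counting; after you repair the pointwise identity, one block is forced (modulo the gauge group, which here only permits shifts by functions of a single index) to carry the pure pairwise interaction, e.g. $\phi^{(3)}_{x_1,x_3}=L_{x_1,1,x_3}-L_{x_1,1,1}-L_{1,1,x_3}+L_{1,1,1}$, an alternating sum of \emph{four} entries. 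The bound your argument then yields is $|\phi^{(3)}|\le 4\log M$, i.e.\ $\theta^{(3)}\in[M^{-4},M^{4}]$, missing the claimed $[M^{-2},M^{2}]$; a facet of size $s$ can in general accumulate $2^{s}$ terms. So the "face-lattice bookkeeping" you defer is not mere bookkeeping: it is exactly where the constant is won or lost. To close it you must either restrict to partitions (which suffices for everything the paper later uses) or argue, as the paper's induction does, that each successive parameter is a ratio of $\psi_x$ by a partial product that is itself pinned to a tensor value at an anchor index and therefore lies in $[M^{-1},M]$, rather than asserting a two-entry representation facet by facet.
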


\begin{proof}
We successively construct a set of parameters and show these satisfy the proposition.  One set of parameters can be defined by performing the following steps:
\begin{enumerate}
\item Set $\mathcal{I} = \mathcal{R}$;
\item For $j = 1,\ldots,m$:
\begin{enumerate}
\item Set $\mathcal{H}_j = \{\mathcal{X}_{j} \in \mathcal{R}_{j} : x \in \mathcal{I}\}$;
\item While $\mathcal{H}_j \neq \emptyset$:
\begin{enumerate}
\item Select an arbitrary element $u \in\mathcal{I}$;
\item Set $\theta_{\mathcal{U}_{k}} = 1$ for all $k = (j+1),\ldots,m$;
\item Set $\theta_{\mathcal{X}_{j}} = \textstyle\psi_x/\prod_{k=1}^{j-1}\theta_{\mathcal{X}_{k}}$, for all $x \in \mathcal{I}$ such that $\mathcal{X}_{k} = \mathcal{U}_{k}, \forall k = (j+1),\ldots,m$;
\item Set $\mathcal{I} = \mathcal{I}\setminus\{x \in \mathcal{I} : \mathcal{X}_{k} = \mathcal{U}_{k}, \forall k = (j+1),\ldots,m\}$;
\item Set $\mathcal{H}_j = \mathcal{H}_j\setminus\{\mathcal{X}_{j} \in \mathcal{H}_j : x \in \mathcal{I} \text{ such that } \mathcal{X}_{k} = \mathcal{U}_{k}, \forall k = (j+1),\ldots,m\}$.
\end{enumerate}
\end{enumerate}
\end{enumerate}
Observe $\mathcal{I}$ lists the subset of indices of $\mathcal{R}$ for which the decomposition is undefined, and $\mathcal{H}_j$ lists the subset of indices of $\mathcal{R}_{j}$ for which the decomposition is undefined.  The intuition behind this algorithm is we successively specify the parameters of the decomposition until there are no indices for which the decomposition is undefined.  The inner loop ensures $\mathcal{H}_j$ becomes empty, and the set $\mathcal{I}$ becomes empty at the end of the algorithm because $\mathcal{I} \equiv \{v \in \mathcal{I} : \mathcal{V}_{k} = \mathcal{U}_{k}, \forall k = (j+1),\ldots,m\}$ when $j = m$.

Next, note that the parameters $\theta_{\mathcal{U}_{k}}$ trivially satisfy $M^{-2} \leq \theta_{\mathcal{U}_{k}} \leq M^{2}$ since $\theta_{\mathcal{U}_{k}} = 1$, and so we only need to show that the remaining parameters satisfy the bounds of the proposition.  For any $j > 1$, suppose that $M^{-1} \leq \prod_{k=1}^{j-1}\theta_{\mathcal{X}_{k}} \leq M$.  If this condition holds, then two consequences follow from step 2.b.iii: (i) $M^{-2} \leq \theta_{\mathcal{X}_{j}} \leq M^2$, and (ii) $M^{-1} \leq \prod_{k=1}^{j}\theta_{\mathcal{X}_{k}} \leq M$.  In fact, for $j=1$ we have that $M^{-1} \leq \theta_{\mathcal{X}_{1}} \leq M$, since $\theta_{\mathcal{X}_{1}} = \psi_x/\prod_{k=1}^{0}\theta_{\mathcal{X}_{k}} = \psi_x$.  This inductively shows that the bounds of the proposition hold for all the remaining parameters.$\qquad$
\end{proof}

This result implies that the parameters $\Theta$ of the decomposition are bounded by an amount that is independent of $p$ and $\Gamma$ as long as the individual entries of the tensor are bounded as in \textbf{A1}.  This will allow us to define constraints in our optimization problems that ensure the numerical scaling of different parameters is controlled.  For numerical reasons, we would like to avoid scalings in which some parameters are very large and other parameters are very small.  This proposition allows us to define constraints that control the scaling.

\subsection{Partition Decomposition}
\label{section:pd}

An important special case of a hierarchical decomposition is when the facets of the simplicial complex $\Gamma$ are a partition of the set $[p]$.  We refer to this instance as a partition decomposition.  The partition decomposition can be written as $\psi_x = \prod_{k = 1}^m\theta_{\mathcal{X}_{k}} = (\theta^{(1)}\otimes\cdots\otimes\theta^{(m)})_{P(x)}$, where the middle equation is the partition decomposition, $\otimes$ is the tensor product, and $P(\cdot)$ is an appropriately-defined permutation of the indices. The partition decomposition is of note because it can be written as the product of tensors with smaller order than $\psi$, and because it exactly coincides in specific cases with a low-rank \textsc{cp} decomposition of tensors.  The \textsc{cp} decomposition is defined as $\psi = \textstyle\sum_{j=1}^q v_1^j \otimes\cdots\otimes v_p^j$, where $v_i^j \in \mathbb{R}^{r_i}$ and $q = \text{rank}_\otimes(\psi)$ is the tensor rank \cite{kolda2009}, and it is a typical tensor decomposition and an analog of the matrix singular value decomposition \cite{kolda2009,hillar2013}.

The simplest case in which the partition decomposition coincides with the \textsc{cp} decomposition is when the partition is given by $\text{facets}(\Gamma) = \{\{1\},\ldots,\{p\}\}$.  In this case, both the partition and \textsc{cp} decompositions represent a rank-1 tensor: $\psi_x = \prod_{k = 1}^p\theta_{x_k} = (\theta^{(1)}\otimes\cdots\otimes\theta^{(p)})_x$, where the $\theta^{(k)} \in \mathbb{R}^{r_k}$ are vectors, the middle equation is the partition decomposition, and the right equation is the \textsc{cp} decomposition.  The decompositions coincide in this case because they are equivalent.

Another instance where the partition and \textsc{cp} decompositions coincide is when the $\theta^{(k)}$ are either vectors or matrices of full rank.  Assume the partitions are arranged so $\theta^{(1)},\ldots,\theta^{(s)}$ are matrices and $\theta^{(s+1)},\ldots,\theta^{(m)}$ are vectors.  Also, let a matrix decomposition of $\theta^{(k)}$ be given by $\theta^{(k)} = \textstyle\sum_{j=1}^{q_k}u_k^{j}\otimes v_k^{j}$, where $q_k$ is the matrix rank of $\theta^{(k)}$, and $u_k^{j},v_k^{j}$ are vectors of appropriate dimensions.  Then we have $\psi_x = \textstyle\prod_{k = 1}^m\theta_{\mathcal{X}_{k}} = \big(\textstyle\sum_{j_1\times \cdots\times j_s\in[q_1]\times\cdots\times[q_s]}u_1^{j_1}\otimes v_1^{j_1}\otimes\cdots \otimes u_s^{j_s}\otimes v_s^{j_s}\otimes\theta^{(s+1)}\otimes\cdots\otimes\theta^{(m)}\big)_{P(x)}$, where the middle and right equations are the partition and \textsc{cp} decompositions, respectively.  The decompositions coincide in this case because the partition decomposition can be used to compute the \textsc{cp} decomposition by computing the matrix singular value decomposition of $\theta^{(k)}$; similarly, the \textsc{cp} decomposition can be used to compute the partition decomposition by computing $\theta^{(k)} = \textstyle\sum_{j=1}^{q_k}u_k^{j}\otimes v_k^{j}$.

\section{Randomized Algorithm for Decompositions and Approximations}

\label{section:rad}

The algorithm in Proposition \ref{proposition:well-posed} implies a hierarchical decomposition can be computed in steps that are polynomial in the number of tensor entries.  However, this computational complexity can be improved with a randomized algorithm that will only need a polynomial in effective dimension $\rho$ number of arithmetic calculations.  This can be a significant improvement because the effective dimension can be much smaller than the number of tensor entries: For instance, a rank-1 tensor has effective dimension $\rho = \sum{r_i}=O(rp)$ while it has $\prod{r_i}=O(r^p)$ entries.


Our approach to developing a randomized algorithm for computing a hierarchical decomposition is to randomly sample entries of the tensor.  With enough samples, the decomposition will have low error with high probability.  In anticipation of generalizing to the tensor completion problem, we allow the sampled entries to be measured with noise.  This noise could be deterministically interpreted as the approximation error of a hierarchical decomposition, meaning the hierarchical decomposition for a specified $\Gamma$ that is closest (as measured by some loss function) to the tensor $\psi$.  As a result, the statistical consequences have deterministic interpretations.  

This section begins by describing the noise and measurement model for sampling entries of the tensor, and then attention turns towards choosing the loss function that will be used to measure the discrepancy between the computed decomposition and the sampled entries.  Specific computational and statistical challenges with choosing the loss function are discussed, and this precludes the use of a squared loss function or of taking the logarithm of the data.  We propose an alternative loss function: This loss has the same minimizer in specific cases as that of the squared loss function, and we show it is majorized and minorized by the squared loss function.  Furthermore, we show this loss function can be minimized in polynomial time by exactly reformulating the optimization problem as a convex program.

Next, we use this reformulation to show an equivalence result between our loss function and the decomposition error as measured by the squared loss.  This equivalence result allows us to study approximation properties using our loss function and then apply the approximation properties to the squared loss.  We use the stochastic processes theory of Rademacher complexity \cite{bartlett2002,kakade2009,lafferty2012,boucheron2013} to bound the approximation error induced by computing a decomposition using a sample of tensor entries (rather than using all the tensor entries).  And the section concludes by presenting a randomized algorithm, which uses the alternative loss function, and proving it has polynomial-time complexity in terms of effective dimension $\rho$.

\subsection{Noise and Measurement Model}


Note we use the indexing notation $\langle i\rangle$ to denote the $i$-th measurement.  For a randomly chosen set of indices $x\langle i\rangle \in \mathcal{R}$, suppose we make a noisy measurement of the corresponding tensor entry $y\langle i\rangle = (1 + z\langle i\rangle)\cdot\psi_{x\langle i\rangle}$, where $z\langle i\rangle$ is noise.  A multiplicative noise model, as opposed to an additive noise model, is used here because this allows us to define a statistical model where measurements $y\langle i\rangle$ are positive-valued while the noise is independent of $x\langle i\rangle$.  However, our results also apply to the case of additive zero-mean noise with the only changes being in the constants of the resulting bounds.  Rather than complicating the presentation, we focus on the multiplicative noise model.  We make the following assumption about the noise:\\

\noindent \textbf{A2}. The noise $z\langle i\rangle$ are iid random variables with a mean of zero $\mathbb{E}(z)=0$, and they are bounded $\mu^{-1} \leq 1+z \leq \mu$ by some constant $\mu > 1$.\\

\noindent The bounds on noise could be relaxed to be unbounded in both directions (i.e., positive and negative).  This is appealing because many interesting noise distributions satisfying the property $\mathbb{E}(z)=0$ are sub-gamma distributions \cite{boucheron2013}.  We do not consider these cases because their consideration does not provide additional theoretical insights; the main difference is slower rates of convergence for heavier-tailed distributions.  And so for simplicity, we assume the above boundedness condition; however, we will use the gamma distribution (which is unbounded) to generate noise for the synthetic data in our numerical examples.

Another note is the reason for choosing a model with $\mathbb{E}(z)=0$ is so $\mathbb{E}[y|x] = \psi_{x}$ holds.  This is a mild assumption because we are interested in computing a decomposition that best approximates $\psi_x$; the $\Theta$ themselves do not have any particular meaning in our decomposition because they are nonunique up to a scaling factor.

We also make an assumption about the measurements available.  For now, we will not impose any conditions on the distribution, except for requiring iid measurements.\\

\noindent \textbf{A3}. The data are iid measurements $(x\langle i\rangle,y\langle i\rangle)$, for $i = 1,\ldots,n$, where $n$ is the number of measurements.

\subsection{Challenges with Choosing Loss Function}

The usual loss function is the squared loss $L(\Theta) = \mathbb{E}((y - \textstyle\prod_{k=1}^m\theta_{\mathcal{X}_k})^2)$,
and when $\Gamma$ is correct the minimizer is given by $\Theta^*$ such that $\psi_x = \prod_{k=1}^m\theta_{\mathcal{X}_k}^*$ \cite{banerjee2005}.  But numerically minimizing this loss is difficult because of nonconvexity of the squared loss in the parameters $\Theta$.  One common approach is to use a heuristic such as alternating least squares (ALS), but this only converges to local optimum \cite{kolda2009}.  

Given the structure of the hierarchical decomposition, it is tempting to compute the decomposition by minimizing $\mathbb{E}((\log y - \textstyle\sum_{k=1}^m\log \theta_{\mathcal{X}_k})^2)$, because this converts the optimization into a linear least squares problem.  However, this a problematic choice because the approach in \cite{banerjee2005} can be used to show that the minimizer of the above loss function is $\mathbb{E}[\log y | x] = \textstyle\sum_{k=1}^m\log \theta_{\mathcal{X}_k} + \mathbb{E}(\log (1+z))$. This is nonideal because the solution will be incorrect by the amount $\mathbb{E}(\log (1+z)) \neq 0$.  Jensen's inequality for concave functions implies $\mathbb{E}(\log (1+z)) \leq \log \mathbb{E}((1+z)) = \log 1 = 0$; so the general case is the nuisance parameter $\mathbb{E}(\log (1+z))$ will be nonpositive.  Taking the exponent $\exp(\mathbb{E}[\log y | x])$ does not resolve the problem because we still have a multiplicative error of $\exp(\mathbb{E}(\log (1+z))) \neq 1$.  

\subsection{Alternative Loss Function}

So if we do not \emph{a priori} know the value of the nuisance parameter $\mathbb{E}(\log (1+z))$, then we could devise a two step procedure that consistently estimates this nuisance parameter and then removes it from the least squares solution, in order to compute a best hierarchical decomposition of $\psi_x$.  We can eliminate the need for considering this nuisance parameter by defining an alternative loss function.  This choice will be subsequently justified by showing that it displays faithful error properties and is amenable to polynomial-time computation.

We use the following loss function
\begin{equation}
\label{eqn:nonconvex_risk}
R(\Theta) = \mathbb{E}\big(-y\cdot\textstyle\sum_{k = 1}^m\log \theta_{\mathcal{X}_k} + \textstyle\prod_{k = 1}^m\theta_{\mathcal{X}_k}\big),
\end{equation}
and the best approximate hierarchical decomposition
\begin{equation}
\label{eqn:nonconvex}
\hat{\Theta} = \arg\min \big\{\hat{R}(\Theta)\ \big|\ \Theta \in \Omega\big\}.
\end{equation}
is defined to be the minimizer of the empirical loss function
\begin{equation}
\hat{R}(\Theta) = \textstyle\frac{1}{n}\sum_{i=1}^n \big(-y\langle i\rangle\cdot\textstyle\sum_{k = 1}^m\log \theta_{\mathcal{X}_k\langle i\rangle} + \textstyle\prod_{k = 1}^m\theta_{\mathcal{X}_k\langle i\rangle}\big)
\end{equation}
subject to the constraint set
\begin{equation}
\label{eqn:nonconvex_constraint}
\Omega = \big\{\Theta : M^{-1} \leq \textstyle\prod_{k = 1}^m \theta_{\mathcal{X}_k} \leq M,\ M^{-2} \leq \theta_{\mathcal{X}_k} \leq M^2,\ \forall x \in \mathcal{R}\big\}.
\end{equation}
We justify this choice by first showing an equivalence to the usual squared loss function.  Our second justification is it enables polynomial-time computation for specific approximation problems for positive tensors that are NP-hard in the case of a general (i.e., not necessarily positive) tensor, and this is shown by rewriting (\ref{eqn:nonconvex}) as a convex optimization problem with a polynomial in $\rho$ and $n$ number of constraints.  

\subsection{Error Properties of Loss Function}

We show the loss function (\ref{eqn:nonconvex_risk}) has favorable error properties.  This loss function resembles the negative log-likelihood for a Poisson distribution: $\frac{1}{n}\sum_{i=1}^n (-y_i\log \mu + \mu)$, where $\mu > 0$ is the rate parameter of the distribution, and this is not surprising because this likelihood can be used to fit hierarchical log-linear models to contingency tables \cite{drton2009}.  Furthermore, maximum likelihood decomposition of nonnegative tensors of count data using the Poisson distribution has been previously considered \cite{chi2012}.  However, this is the wrong interpretation for our case because $\psi_x$ can take continuous (non-integer) values and should not be interpreted as counts in general.

A better interpretation for the loss function (\ref{eqn:nonconvex_risk}) is as a Bregman divergence \cite{banerjee2005}, or more specifically a generalized I-divergence (which is a generalization of the Kullback-Leibler divergence) \cite{banerjee2005,manthey2009}.  This is a more natural interpretation because of the following proposition that shows minimizing either our loss $R(\Theta)$ or the squared loss $L(\Theta)$ recovers the same solution when $\Gamma$ is correct.

\begin{proposition}[Banerjee, et al., 2005 \cite{banerjee2005}]
\label{prop:oracle_equiv}
If $\mathbf{A1}$,$\mathbf{A2}$ hold and $\Gamma$ is correct, then $\Theta^* \in \arg\min \{R(\Theta)\ |\ \Theta \in \Omega \} \Leftrightarrow \Theta^* \in \arg\min \{L(\Theta)\ |\ \Theta \in \Omega \}$.  Moreover, the solution $\Theta^*$ has the property $\psi_x = \prod_{k=1}^m\theta^*_{\mathcal{X}_k}$.
\end{proposition}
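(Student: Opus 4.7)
The plan is to exploit the fact that both losses depend on $\Theta$ only through the scalar quantity $\mu(x) := \prod_{k=1}^m \theta_{\mathcal{X}_k}$ and so can be analyzed as functionals of $\mu(\cdot)$. I would first note that the multiplicative noise model together with $\mathbf{A2}$ gives $\mathbb{E}[y\mid x] = (1+\mathbb{E}(z))\psi_x = \psi_x$. For the squared loss, the standard bias-variance identity
\begin{equation}
\mathbb{E}\bigl[(y-\mu(x))^2 \,\big|\, x\bigr] = \operatorname{Var}(y\mid x) + (\psi_x - \mu(x))^2
\end{equation}
shows that the inner (conditional) expectation is strictly convex in $\mu(x) \in \mathbb{R}$ and uniquely minimized at $\mu(x) = \psi_x$. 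Taking outer expectation over $x$, this identifies the unique pointwise minimizer for $L$.

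Next I would check that $R$ has the same pointwise minimizer. Using $\sum_{k}\log \theta_{\mathcal{X}_k} = \log \mu(x)$ and the law of total expectation,
\begin{equation}
R(\Theta) = \mathbb{E}_x\bigl(-\psi_x \log \mu(x) + \mu(x)\bigr).
\end{equation}
For each fixed $x$, the map $\mu \mapsto -\psi_x \log \mu + \mu$ is strictly convex on $(0,\infty)$ since $\psi_x > 0$ by $\mathbf{A1}$, and differentiating in $\mu$ gives the unique stationary point $\mu = \psi_x$. Thus, regarded as functionals of $\mu(\cdot) \in (0,\infty)$, both $L$ and $R$ are strictly convex pointwise in $x$ and share the identical unique pointwise minimizer $\mu^\star(x) = \psi_x$. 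This is essentially the Bregman-divergence argument of \cite{banerjee2005} specialized to generalized I-divergence.

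The last step is to lift the pointwise conclusion back to the set $\Omega$. Correctness of $\Gamma$ together with Proposition \ref{proposition:well-posed} yields a $\Theta^\star$ with $\prod_k \theta^\star_{\mathcal{X}_k} = \psi_x$ and parameter bounds $M^{-2} \leq \theta^\star_{\mathcal{X}_k} \leq M^2$; combined with $M^{-1} \leq \psi_x \leq M$ from $\mathbf{A1}$, this $\Theta^\star$ lies in $\Omega$. Because the pointwise optima are achievable inside $\Omega$, strict convexity in $\mu$ forces
\begin{equation}
\arg\min_{\Theta\in\Omega} L(\Theta) = \arg\min_{\Theta\in\Omega} R(\Theta) = \bigl\{\Theta\in\Omega : \textstyle\prod_k\theta_{\mathcal{X}_k} = \psi_x\text{ for all }x\in\mathcal{R}\bigr\},
\end{equation}
which simultaneously gives the equivalence and the "moreover" clause.

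The main subtlety — rather than a genuine obstacle — is verifying that the pointwise-unconstrained minimum $\mu^\star(x)=\psi_x$ is attainable inside $\Omega$ rather than at its boundary. This is exactly what Proposition \ref{proposition:well-posed} was designed to guarantee, so no compactness or KKT argument is needed; the remainder of the proof is elementary convex calculus applied pointwise in $x$.
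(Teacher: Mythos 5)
Your proof is correct and is essentially the standard Bregman-divergence argument that the paper simply cites from Banerjee et al.\ (2005) without reproducing: conditioned on $x$, both losses are strictly convex functions of the single scalar $\mu(x)=\prod_{k}\theta_{\mathcal{X}_k}$ with common minimizer $\psi_x$, and correctness of $\Gamma$ together with Proposition~\ref{proposition:well-posed} guarantees this pointwise optimum is attained inside $\Omega$. The only implicit assumption, which the paper shares, is that the sampling distribution of $x$ has full support on $\mathcal{R}$; otherwise the identity $\prod_{k}\theta^*_{\mathcal{X}_k}=\psi_x$ is only forced on the support of that distribution.
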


A further justification for using the loss (\ref{eqn:nonconvex_risk}) is that it is equivalent to the squared loss in the sense that it both majorizes and minorizes the squared loss.  

\begin{proposition}
\label{prop:equivalence}
Under $\mathbf{A1}$,$\mathbf{A2}$ and for any $\Gamma$, the loss function $R(\Theta)$ majorizes and minorizes the squared loss function $L(\Theta)$, meaning $\textstyle a_l\cdot L(\Theta) + b_l\leq R(\Theta)\leq a_u\cdot L(\Theta) + b_u$, where constants $a_l,a_u > 0$ and $b_l,b_u$ depend on $\mu,M$.
\end{proposition}

\begin{proof}
Define $\mathcal{M} = [(\mu M)^{-1},\mu M]$, and consider the function $f(u) = -y\log(u) + u$ over the domain $u \in \mathcal{M}$.  This function is strongly convex for $u\in\mathcal{M}$, and so we have $-y\log(u) + u \geq -y\log(v) + v + (-y/v + 1)\cdot(u - v) + y\cdot(u-v)/2(\mu M)^2$. Choosing $v = y$ gives $-y\log(u) + u \geq -\mu M\log(\mu M) + (\mu M)^{-1}+(u-y)^2/2(\mu M)^3$.  The lower bound follows by setting $u = \prod_{k=1}^m\theta_{\mathcal{X}_k}$ and taking the expectation of both sides.

The upper bound is shown using the mean-value form of Taylor's theorem, which states that for any $u,v\in \mathcal{M}$: $-y\log(u) + u = -y\log(v)+v + (-y/v + 1)\cdot(u-v) + y\cdot(u-v)^2/2z^2$, for some $z \in \mathcal{M}$ between $u$ and $v$.  As a result, we have $-y\log(u) + u \leq -y\log(v)+v + (-y/v + 1)\cdot(u-v) + (\mu M)^2\cdot y\cdot(u-v)^2/2$. Choosing $v = y$ gives $-y\log(u) + u \leq -y\log(y)+y + M^2\cdot y\cdot(u-y)^2/2 \leq (\mu M)^{-1}\log (\mu M) + \mu M + (\mu M)^3\cdot(u-y)^2/2$. The result follows by setting $u = \prod_{k=1}^m\theta_{\mathcal{X}_k}$ and taking the expectation of both sides.$\qquad$
\end{proof}

\subsection{Computational Properties}

An equivalent reformulation of (\ref{eqn:nonconvex}) can be defined using the following reparametrization of the loss function
\begin{equation}
R(U) = \mathbb{E}\big(-y\cdot\textstyle\sum_{k = 1}^mu_{\mathcal{X}_k} + \exp\big(\sum_{k=1}^mu_{\mathcal{X}_k}\big)\big), \label{eqn:convex_loss}
\end{equation}
and the relationship between parametrizations is that $u_{\mathcal{X}_k} = \log \theta_{\mathcal{X}_k}$.  The loss function $R(U)$ is convex in $u_{\mathcal{X}_k}$, unlike the original parametrization (\ref{eqn:nonconvex}) which is nonconvex in $\theta_{\mathcal{X}_k}$.  Moreover, the $\prod r_i$ number of constraints in $\Omega$ can be reduced to a polynomial in $\rho$ number of constraints by using a linear program (LP) lift \cite{yannakakis1991}.  Consider the set
\begin{multline}
\Phi = \big\{U : \exists \eta_k,\nu_k \text{ s.t. } \eta_k \leq u_{\mathcal{X}_k} \leq \nu_k,\ -2\log M \leq \eta_k,\ \nu_k \leq 2\log M,\\
-\log M \leq \textstyle\sum_{k = 1}^m\eta_k,\ \textstyle\sum_{k = 1}^m\nu_k\leq \log M,\ \forall x \in \mathcal{R}\big\}.
\end{multline}
We use this to define our reparametrized best approximate hierarchical decomposition as the minimizer to the following convex optimization problem
\begin{equation}
\label{eqn:convex}
\hat{U} = \arg\min \big\{\hat{R}(U)\ \big|\ U \in \Phi\big\},
\end{equation}
where the reparametrized empirical loss function is
\begin{equation}
\hat{R}(U) = \textstyle\frac{1}{n}\sum_{i=1}^n \big(-y\langle i\rangle\cdot\textstyle\sum_{k = 1}^mu_{\mathcal{X}_k\langle i\rangle} + \textstyle\exp\big(\sum_{k = 1}^mu_{\mathcal{X}_k\langle i\rangle}\big)\big).
\end{equation}
The following proposition shows that (\ref{eqn:convex}) is equivalent to (\ref{eqn:nonconvex}).

\begin{proposition}
\label{proposition:convex_equivalence}
Under $\mathbf{A1}$\textrm{--}$\mathbf{A3}$ and for any $\Gamma$, the solution to $(\ref{eqn:convex})$ is equivalent to the solution of $(\ref{eqn:nonconvex})$ with the invertible (under $\mathbf{A1}$) mapping $u_{\mathcal{X}_k} = \log \theta_{\mathcal{X}_k}$.
\end{proposition}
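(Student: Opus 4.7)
The plan is to show two things: (i) under the substitution $\theta_{\mathcal{X}_k}=\exp(u_{\mathcal{X}_k})$, the empirical objectives $\hat R(\Theta)$ and $\hat R(U)$ agree on corresponding arguments, and (ii) the feasible sets $\Omega$ and $\Phi$ are in bijective correspondence under this substitution. Since $\log:(0,\infty)\to\mathbb{R}$ is a bijection and \textbf{A1} forces every admissible $\theta$ to be strictly positive, the mapping is genuinely invertible on $\Omega$, so (i) and (ii) together transfer minimizers between the two problems.

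For (i), plugging $\theta_{\mathcal{X}_k\langle i\rangle}=\exp(u_{\mathcal{X}_k\langle i\rangle})$ into each summand of $\hat R(\Theta)$ sends $\log\theta_{\mathcal{X}_k\langle i\rangle}$ to $u_{\mathcal{X}_k\langle i\rangle}$ and $\prod_{k=1}^m\theta_{\mathcal{X}_k\langle i\rangle}$ to $\exp\bigl(\sum_{k=1}^m u_{\mathcal{X}_k\langle i\rangle}\bigr)$, which reproduces exactly the corresponding summand of $\hat R(U)$. Hence the two objective values coincide on any pair of parameters related by the substitution.

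For (ii), the per-entry bound $M^{-2}\le\theta_{\mathcal{X}_k}\le M^2$ translates to $-2\log M\le u_{\mathcal{X}_k}\le 2\log M$, and the product bound $M^{-1}\le\prod_k\theta_{\mathcal{X}_k}\le M$ translates to $-\log M\le\sum_k u_{\mathcal{X}_k}\le\log M$ for every $x\in\mathcal{R}$. Forward direction: given $\Theta\in\Omega$, set $\eta_{k,\mathcal{X}_k}=\nu_{k,\mathcal{X}_k}=u_{\mathcal{X}_k}$; the box and sum conditions defining $\Phi$ then reduce exactly to the translated $\Omega$ constraints. Reverse direction: given $U\in\Phi$ with witnesses $\eta,\nu$, sandwiching $\eta_{k,\mathcal{X}_k}\le u_{\mathcal{X}_k}\le\nu_{k,\mathcal{X}_k}$ against the individual box bounds yields $-2\log M\le u_{\mathcal{X}_k}\le 2\log M$, while summing the sandwich across $k$ and combining with the sum bounds on $\eta,\nu$ yields $-\log M\le\sum_k u_{\mathcal{X}_k}\le\log M$; exponentiating recovers $\Theta\in\Omega$.

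The main subtlety is bookkeeping: the lift variables must be understood as carrying the additional index $\mathcal{X}_k$ (one pair $(\eta_{k,\mathcal{X}_k},\nu_{k,\mathcal{X}_k})$ for each $(k,\mathcal{X}_k)$ with $\mathcal{X}_k\in\mathcal{R}_k$), so that the sandwich step in the reverse direction can be invoked simultaneously for every $x\in\mathcal{R}$. Once this indexing convention is made explicit, the equivalence of the two programs reduces to a direct substitution, and the equality of minimizers follows from the equality of objectives on the corresponding feasible sets.
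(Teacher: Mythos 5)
Your part (i) — that the objectives coincide under $\theta_{\mathcal{X}_k}=\exp(u_{\mathcal{X}_k})$ — is correct and is exactly what the paper uses. The problem is in part (ii), and specifically in the "bookkeeping" paragraph where you declare that the lift variables carry the extra index $\mathcal{X}_k$, i.e.\ one pair $(\eta_{k,\mathcal{X}_k},\nu_{k,\mathcal{X}_k})$ per entry. That is not the set $\Phi$ the paper defines: in $\Phi$ there is a single pair $(\eta_k,\nu_k)$ per facet $k$, with the constraints $\eta_k\le u_{\mathcal{X}_k}\le\nu_k$ imposed uniformly over all $\mathcal{X}_k\in\mathcal{R}_k$ and a single pair of sum constraints $-\log M\le\sum_k\eta_k$, $\sum_k\nu_k\le\log M$. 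This indexing is the entire point of the LP lift: it is what reduces the $\prod_i r_i$ product constraints of $\Omega$ to a polynomial-in-$\rho$ number of linear constraints, which Proposition 4's complexity count then relies on. Under your re-indexing the lifted description has the same exponential size as $\Omega$ itself, so while the "equivalence" becomes tautological, you have proved it for a different set than the one in the statement.

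With the paper's actual $\Phi$, your forward construction $\eta_{k,\mathcal{X}_k}=\nu_{k,\mathcal{X}_k}=u_{\mathcal{X}_k}$ is ill-typed ($\eta_k$ cannot depend on $\mathcal{X}_k$). The correct witnesses are $\eta_k=\min_{\mathcal{X}_k\in\mathcal{R}_k}u_{\mathcal{X}_k}$ and $\nu_k=\max_{\mathcal{X}_k\in\mathcal{R}_k}u_{\mathcal{X}_k}$, and one must then check $\sum_k\nu_k\le\log M$ and $-\log M\le\sum_k\eta_k$; this uses the identity $\sum_{k}\max_{\mathcal{X}_k}u_{\mathcal{X}_k}=\max_{x\in\mathcal{R}}\sum_k u_{\mathcal{X}_k}$, which holds because distinct facets of a partition involve disjoint coordinates of $x$, so the per-facet maximizations can be combined into a single valid index $x$. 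That observation is the nontrivial content of the forward inclusion and is missing from your argument. Your reverse direction (sandwiching $u_{\mathcal{X}_k}$ between $\eta_k$ and $\nu_k$ and summing) is fine and is essentially the paper's argument.
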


\begin{proof}
We have already argued above that $R(U)$ and $R(\Theta)$ are identical under $u_{\mathcal{X}_k} = \log \theta_{\mathcal{X}_k}$, and so to prove the first part we have to show $\Phi$ is equivalent to $\Omega$ when using the same mapping.  Observe that for points belonging to $\Phi$, we must have $\eta_k \leq \min_{\mathcal{X}_k \in \mathcal{R}_k} u_{\mathcal{X}_k}$ and $\max_{\mathcal{X}_k \in \mathcal{R}_k} u_{\mathcal{X}_k} \leq \nu_k$.  Combining this with the other inequalities defining $\Phi$ leads to $-\log M \leq \min_{x\in\mathcal{R}}\sum_{k = 1}^mu_k$ and $\max_{x\in\mathcal{R}}\sum_{k = 1}^mu_k\leq \log M$, which is the same (under the equivalence) as $M^{-1} \leq \textstyle\prod_{k = 1}^m \theta_{\mathcal{X}_k} \leq M$ from $\Omega$.  A similar argument gives that $-2\log M \leq \eta_k$ and $\nu_k \leq 2\log M$ from $\Phi$ is the same as $M^{-2} \leq \theta_{\mathcal{X}_k} \leq M^2$ from $\Omega$, under the equivalence.  Because the objective and constraints of (\ref{eqn:convex}) and (\ref{eqn:nonconvex}) are the same when equating $u_{\mathcal{X}_k} = \log \theta_{\mathcal{X}_k}$, we have that the solution to (\ref{eqn:convex}) is the same as the solution to (\ref{eqn:nonconvex}).$\qquad$
\end{proof}

We also have the following result about the polynomial-time solvability of (\ref{eqn:nonconvex}).  It is proved by considering the convex reparametrization (\ref{eqn:convex}) and explicitly defining a barrier function for a path-following interior-point method, and then using the methods of \cite{nesterov1994,nemirovski2004} to conduct a complexity analysis for an interior-point method with this barrier function.  The result is stated in terms of the complexity of computing an $\epsilon$-solution, which is a solution $x^\epsilon$ of an optimization problem $f^* = \min \{f(x)\ |\ f_i(x) \leq 0, \forall i;\ x \in G\}$, such that (i) $f(x^\epsilon) - f^* \leq \epsilon$, (ii) $f_i(x^\epsilon) \leq \epsilon$ for all $i$, and (iii) $x^\epsilon \in G$.

\begin{proposition}
\label{proposition:convex}
Under $\mathbf{A1}$--$\mathbf{A3}$ and for any $\Gamma$, an $\epsilon$-solution to the optimization problem $(\ref{eqn:nonconvex})$ can calculated with $O(1)(\rho^3+n^3)\sqrt{\rho+n}\log(\textstyle\frac{1}{\epsilon}(\mu\rho M\log M)(\rho+n))$ arithmetic steps, which is polynomial time in $\mu,\rho,M,n$.
\end{proposition}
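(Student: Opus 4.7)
The plan is to reduce the original nonconvex problem (\ref{eqn:nonconvex}) to a polynomially sized convex program and then invoke the standard interior-point complexity analysis of \cite{nesterov1994,nemirovski2004}. By Proposition \ref{proposition:convex_equivalence}, it suffices to solve the convex reparametrization (\ref{eqn:convex}). First I would lift the $n$ exponential summands in $\hat{R}(U)$ into epigraph form by introducing auxiliary variables $t_i \geq \exp\bigl(\sum_{k=1}^m u_{\mathcal{X}_k\langle i\rangle}\bigr)$ for $i=1,\ldots,n$, so the objective becomes linear in $(U,\{t_i\})$. The resulting problem has $O(\rho+n)$ decision variables (counting the entries of $U$, the auxiliaries $\eta_k,\nu_k$ from $\Phi$, and the $t_i$), a number of affine constraints linear in $\rho+n$, and $n$ exponential-epigraph constraints.

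Next I would equip the feasible region with a self-concordant barrier. Each affine inequality in $\Phi$ contributes a standard logarithmic barrier with self-concordance parameter $1$, summing to $O(\rho)$ over all such constraints. For each exponential-epigraph constraint $t_i \geq \exp(w_i)$, I would use an explicit two-dimensional self-concordant barrier (e.g., the Nesterov--Nemirovski universal construction applied to this convex set, which has bounded parameter). Summing gives a composite barrier for the whole feasible set with parameter $\nu = O(\rho+n)$.

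The standard path-following scheme of \cite{nesterov1994,nemirovski2004} then computes an $\epsilon$-solution in $O(\sqrt{\nu}\log(\nu V/\epsilon))$ Newton iterations, where $V$ bounds the initial optimality gap. Each iteration amounts to solving a linear system of dimension $O(\rho+n)$, costing $O((\rho+n)^3) = O(\rho^3+n^3)$ arithmetic operations. To bound $V$, I would use \textbf{A1}--\textbf{A2}: on $\Phi$ we have $|u_{\mathcal{X}_k}| \leq 2\log M$, and under the noise model $|y\langle i\rangle| \leq \mu M$, so each of the $O(\rho)$ summands contributing to $\hat{R}(U)$ has magnitude $O(\mu M \log M)$, yielding $V = O(\mu\rho M\log M)$. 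Substituting into the iteration count and multiplying by the per-iteration cost recovers exactly the stated complexity.

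The main technical obstacle is the careful construction of the self-concordant barrier and, in particular, verifying that the composite barrier parameter scales linearly in $\rho+n$; this requires invoking the correct barrier for the exponential epigraph and checking compatibility under the sum and affine-preimage operations of \cite{nesterov1994,nemirovski2004}. Once that is pinned down, the bound on $V$ is immediate from \textbf{A1}--\textbf{A3} and the rest is a routine application of the standard path-following complexity estimate.
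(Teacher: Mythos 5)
Your proposal follows essentially the same route as the paper's proof: pass to the convex reparametrization via Proposition \ref{proposition:convex_equivalence}, lift the exponentials into epigraph form with auxiliaries $t_i$, assemble a composite self-concordant barrier of parameter $O(\rho+n)$ from the standard logarithmic and exponential-epigraph barriers of \cite{nesterov1994}, and apply the path-following complexity bound with per-iteration cost $O(\rho^3+n^3)$ and objective bound $V = O(\mu\rho M\log M)$. The approach and all the key estimates match the paper's argument, so the proposal is correct.
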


\begin{proof}
We suitably modify the proof in \cite{nesterov1994} for the polynomial-time solvability of geometric programs: The first step is to reformulate the convex program (\ref{eqn:convex}) as following the convex program
\begin{align}
\label{eqn:convex_nes}
\min\ &\frac{1}{n}\sum_{i=1}^n \big(-y\langle i\rangle\cdot\textstyle\sum_{k = 1}^mu_{\mathcal{X}_k\langle i\rangle} + t_i\big)\\
\text{s.t. } &\exp\big(\textstyle\sum_{k = 1}^mu_{\mathcal{X}_k\langle i\rangle}\big) -t_i \leq 0,\quad \forall i \in [n]\nonumber\\
&\eta_k - u_{\mathcal{X}_k}\leq 0,\ u_{\mathcal{X}_k} -\nu_k\leq 0,\quad \forall k \in [m], \mathcal{X}_k\in\mathcal{R}_k\nonumber\\
&-\log M -\textstyle\sum_{k = 1}^m\eta_k\leq 0,\ \textstyle\sum_{k = 1}^m\nu_k-\log M\leq 0\nonumber\\
&(u,\eta,\nu, t) \in G \nonumber
\end{align}
where $G = \{(u,\eta,\nu, t) : |\eta_k| \leq 2\log M, |\nu_k| \leq 2\log M, |u_{\mathcal{X}_k}| \leq 2\log M, \forall k \in [m], \mathcal{X}_k\in\mathcal{R}_k; |t_i| \leq M, \forall i \in [n]\}$ is a bounded convex set.  Note $x^+ = (0,0,0,0)$ is the symmetry center of $G$, and so the asymmetry coefficient (see \cite{nesterov1994,renegar2001}) of $G$ with respect to $x^+$ is $\alpha(G : x^+) = 1$.  From Propositions 5.1.3 and 5.4.1 of \cite{nesterov1994}, it follows that
\begin{multline}
\label{eqn:barrier_G}
\textstyle F=-\sum_{k=1}^m\sum_{\mathcal{X}_k\in\mathcal{R}_k}\big\{\log\big(2\log M+u_{\mathcal{X}_k}\big) + \log\big(2\log M-u_{\mathcal{X}_k}\big)\big\}+\\
\textstyle-\sum_{k=1}^m\big\{\log\big(2\log M+\eta_k\big) + \log\big(2\log M-\eta_k\big)+ \\
\log\big(2\log M+\nu_k\big) +\log\big(2\log M-\nu_k\big)\big\}+\\
\textstyle -\sum_{i=1}^n\big\{\log\big(M+t_i\big) + \log\big(M-t_i\big)\}
\end{multline}
is a $(2\rho+4m+2n)$-self-concordant barrier for $G$.

The next step is to bound the objective and constraints of (\ref{eqn:convex_nes}).  Note $\textbf{A1},\textbf{A2}$ imply the absolute value of the objective is upper bounded by $\mu mM\log M + M$.  Similarly, $\textbf{A1},\textbf{A2}$ imply the absolute value of the left hand side of the constraints are upper bounded by $2M$, $4\log M$, $4\log M$, $\log M + 2m\log M$, $\log M + 2m\log M$, respectively.  Consequently, an upper bound on these upper bounds is $V = 4\mu mM\log M + 2M$.

The third step is to identify barrier functions for the epigraphs of each constraint in (\ref{eqn:convex_nes}).  Proposition 5.4.1 of \cite{nesterov1994} states $-\log(-\tau)$ is a 1-self-concordant barrier for the constraint $\tau \leq 0$.  Similarly, Proposition 5.3.3 of \cite{nesterov1994} states $-\log(\log\big(\tau)-\xi) - \log(\tau)$ is a 2-self-concordant barrier for the constraint $\exp(\xi) \leq \tau$.  Consequently, the following
\begin{multline}
\label{eqn:barrier}
\textstyle-\log\big(3Vt/\epsilon - V - \frac{1}{n}\sum_{i=1}^n \big(-y\langle i\rangle\cdot\textstyle\sum_{k = 1}^mu_{\mathcal{X}_k\langle i\rangle} + t_i\big)\big) + \\
\textstyle-\sum_{i=1}^n\big\{\log\big(\log(t_i+t)-\sum_{k = 1}^mu_{\mathcal{X}_k\langle i\rangle}\big) +\log\big(t_i+t\big)\big\} + \\
\textstyle-\sum_{k=1}^m\sum_{\mathcal{X}_k\in\mathcal{R}_k}\big\{\log\big(-\eta_k+u_{\mathcal{X}_k}\big)+\log\big(-u_{\mathcal{X}_k}+\nu_k\big)\big\}+\\
\textstyle-\log\big(\log M+\sum_{k=1}^m\eta_k\big)-\log\big(\log M-\sum_{k=1}^m\nu_k\big)+F
\end{multline}
is a $(3\rho+4m+4n+3)$-self-concordant barrier function by Proposition 5.1.3 of \cite{nesterov1994}.  Note we have the bound $\rho \geq m$ from the definition of $\rho$.  From the results of \S 6.1 of \cite{nesterov1994}, an $\epsilon$-solution to (\ref{eqn:convex}) can be found in $O(1)\sqrt{\rho+n}\log(\textstyle\frac{1}{\epsilon}(\mu\rho M\log M)(\rho+n))$ steps of the path-following algorithm.  The Newton system for one step is assembled in $O(nm^2+\rho)$ arithmetic steps and can be solved in $O(\rho^3+n^3)$ arithmetic steps.  Consequently, an $\epsilon$-solution to (\ref{eqn:convex}) can be found in $O(1)(\rho^3+n^3)\sqrt{\rho+n}\log(\textstyle\frac{1}{\epsilon}(\mu\rho M\log M)(\rho+n))$ arithmetic steps.  The proof concludes by noting Proposition \ref{proposition:convex_equivalence} implies an $\epsilon$-solution to (\ref{eqn:nonconvex}) can be calculated by applying the transformation $\theta_{\mathcal{X}_k} = \exp(u_{\mathcal{X}_k})$ to the $\epsilon$-solution to (\ref{eqn:convex}).$\qquad$
\end{proof}

This result immediately implies that the best rank-1 approximation of a positive tensor can be computed in polynomial time, which is in contrast to the general case where computing the best rank-1 approximation is NP-hard \cite{hillar2013}.  The approximation problem becomes easier when we restrict our focus to positive tensors.

\begin{corollary}
The best rank-1 approximation, under the loss function $(\ref{eqn:nonconvex_risk})$ and satisfying $\mathbf{A1}$, of a tensor $\psi$ can be computed in polynomial time with a number of arithmetic steps that is polynomial in $r,p,n,\mu,M$.
\end{corollary}

\begin{proof}
The best rank-1 approximation corresponds to a partition decomposition with $\text{facets}(\Gamma) = \{\{1\},\ldots,\{p\}\}$, and so the result follows from Proposition \ref{proposition:convex}.$\qquad$
\end{proof}

\subsection{Bound on Squared Error in Terms of Loss Function}

Define the oracle parameters to be any $\Theta^* \in \arg\min \{R(\Theta)\ |\ \Theta \in \Omega\}$.  Below, we provide a relationship between a squared error function involving $\Theta^*$ and the loss function (\ref{eqn:nonconvex}).  This relationship will serve as useful machinery for proving subsequent results.

\begin{proposition}
\label{prop:weak_equivalence}
Under $\mathbf{A1}$,$\mathbf{A2}$ and for any $\Gamma$, we have for any $\Theta \in \Omega$ that $\textstyle \frac{1}{2M^3}\cdot \textstyle\mathbb{E}((\prod_{k=1}^m\theta_{\mathcal{X}_k} - \prod_{k=1}^m\theta^*_{\mathcal{X}_k})^2) \leq R(\Theta) - R(\Theta^*)$.
\end{proposition}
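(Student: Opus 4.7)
The plan is to exploit the convex reparametrization $u_{\mathcal{X}_k}=\log\theta_{\mathcal{X}_k}$ justified in Proposition \ref{proposition:convex_equivalence}. Writing $v_x=\sum_{k=1}^m u_{\mathcal{X}_k}=\log\prod_{k=1}^m\theta_{\mathcal{X}_k}$, and using $\mathbb{E}[y\mid x]=\psi_x$ from $\mathbf{A2}$ to integrate out the noise, the risk collapses to
\begin{equation*}
R(\Theta)=R(U)=\mathbb{E}_x\bigl[h_x(v_x)\bigr],\qquad h_x(v)=-\psi_x v+e^v,
\end{equation*}
and $v_x$ is a linear function of $U$. By Proposition \ref{proposition:convex_equivalence} the minimizer $\Theta^*$ in $\Omega$ corresponds to the minimizer $U^*$ of this same objective over the convex set $\Phi$.

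First I would extract a strong convexity bound for each $x$. Since $h_x''(v)=e^v$ and for any $\Theta\in\Omega$ one has $e^{v_x}=\prod_k\theta_{\mathcal{X}_k}\in[M^{-1},M]$, the function $h_x$ is $M^{-1}$-strongly convex on the segment joining $v_x$ and $v_x^*$. Hence pointwise
\begin{equation*}
h_x(v_x)-h_x(v_x^*)\ \ge\ h_x'(v_x^*)(v_x-v_x^*)+\tfrac{1}{2M}(v_x-v_x^*)^2.
\end{equation*}
Taking expectations gives $R(\Theta)-R(\Theta^*)\ge \mathbb{E}_x[h_x'(v_x^*)(v_x-v_x^*)]+\tfrac{1}{2M}\mathbb{E}_x[(v_x-v_x^*)^2]$.

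Second I would eliminate the linear cross term via the first-order optimality of $U^*$. Because $R$ is convex in $U$ on the convex set $\Phi$, the variational inequality $\langle\nabla_U R(U^*),U-U^*\rangle\ge 0$ holds for every $U\in\Phi$. Since $v_x$ is linear in $U$, the chain rule yields $\langle\nabla_U R(U^*),U-U^*\rangle=\mathbb{E}_x[h_x'(v_x^*)(v_x-v_x^*)]$, so this cross term is nonnegative and may be dropped, leaving $R(\Theta)-R(\Theta^*)\ge\tfrac{1}{2M}\mathbb{E}_x[(v_x-v_x^*)^2]$.

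Finally I would translate from the $v$-variables back to the product variables $u_x=\prod_k\theta_{\mathcal{X}_k}$. By the mean value theorem applied to $\log$ on the segment between $u_x$ and $u_x^*$, both lying in $[M^{-1},M]$, one has $v_x-v_x^*=(u_x-u_x^*)/\xi$ with $\xi\in[M^{-1},M]$, so $(v_x-v_x^*)^2\ge M^{-2}(u_x-u_x^*)^2$. Combining with the previous inequality produces the desired constant $\tfrac{1}{2M}\cdot M^{-2}=\tfrac{1}{2M^3}$. The only subtle point is making sure the cross term is nonnegative, which is why the argument is carried out in $U$-coordinates rather than directly in $\Theta$ (the set $\Omega$ is not convex in $\Theta$, but $\Phi$ is convex in $U$, so the variational inequality is available); the rest is strong convexity of the exponential and an elementary logarithmic estimate.
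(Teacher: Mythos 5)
Your proposal is correct and follows essentially the same route as the paper's proof: pass to the convex $U$-parametrization, use strong convexity of $v\mapsto -\psi_x v+e^v$ (with modulus $M^{-1}$ on the feasible range) plus the first-order optimality condition on the convex set $\Phi$ to get the $\tfrac{1}{2M}$ bound in the $v$-variables, and then convert to the product variables with a Lipschitz/mean-value estimate for $\exp$ (equivalently, for $\log$) costing a factor $M^{-2}$. The only cosmetic difference is that the paper phrases the last step as Lipschitz continuity of $e^u$ on $[-\log M,\log M]$ while you phrase it via the mean value theorem for $\log$; these are the same estimate.
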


\begin{proof}
We will use the equivalent (by Proposition \ref{proposition:convex_equivalence}) convex reparameterization in $U$ to show the necessary bound.  The first-order optimality condition \cite{rockafellar2009} for the reparametrized optimization problem (\ref{eqn:convex}) is
\begin{equation}
D(U^*,U) = \nabla R(U^*)\cdot(U - U^*) \geq 0, \label{eqn:fooc}
\end{equation}
for all $U \in \Phi$.  Since the probability space of $x \in \mathcal{R}$ is finite, we can interchange the order of differentiation and integration as shown below
\begin{align}
\partial_{\mathcal{X}_j} R(U) &= \textstyle\sum_{x \in \mathcal{R}}f_x\cdot\big(\partial_{\mathcal{X}_j}\big(-\psi_x\cdot\textstyle\sum_{k = 1}^mu_{\mathcal{X}_k} + \exp\big(\sum_{k=1}^mu_{\mathcal{X}_k}\big)\big)\big)\\
&= \textstyle\sum_{x \in \mathcal{R}}f_x\cdot\big(-\psi_x + \exp\big(\textstyle\sum_{k=1}^mu_{\mathcal{X}_k}\big)\big)\cdot\mathbbm{1}_{\mathcal{X}_j}, \label{eqn:fubini}
\end{align}
where $f_x = \mathbb{P}(x)$.  Combining (\ref{eqn:fooc}) and (\ref{eqn:fubini}) leads to 
\begin{align}
D(U^*,U) &= \textstyle\sum_{\mathcal{X}_j}\sum_{x \in \mathcal{R}}f_x\cdot\big(-\psi_x + \exp\big(\textstyle\sum_{k=1}^mu^*_{\mathcal{X}_k}\big)\big)\cdot\mathbbm{1}_{\mathcal{X}_j}\cdot\big(u^{}_{\mathcal{X}_j} - u^{*}_{\mathcal{X}_j}\big)\\
&=\textstyle\mathbb{E}\big(\big(-\psi_x + \exp\big(\textstyle\sum_{k=1}^mu^*_{\mathcal{X}_k}\big)\big)\cdot\big(\textstyle\sum_{k=1}^mu^{}_{\mathcal{X}_k}-\sum_{k=1}^mu^*_{\mathcal{X}_k}\big)\big). \label{eqn:fooc_sum}
\end{align}

Next, consider $f(u) = -yu + e^u$.  Since $f''(u) \geq e^a$ for all $u \in [a,b]$, this function is strongly convex \cite{boyd2004} and satisfies $-yu+e^u \geq -yv+e^v + (-y + e^v)\cdot(u-v) + e^a/2\cdot(u-v)^2$, for all $u,v \in [a,b]$.  Applying this inequality to $R(U)$ gives that for any $U \in \Phi$, $R(U) \geq R(U^*) + D(U^*,U) + \textstyle \frac{1}{2M}\cdot\mathbb{E}((\sum_{k=1}^mu^*_{\mathcal{X}_k} - \sum_{k=1}^mu_{\mathcal{X}_k})^2)$, where we have used (\ref{eqn:fooc_sum}) to simplify the expression.  Since $D(U^*,U) \geq 0$ from (\ref{eqn:fooc}), we have that for any $U \in \Phi$, $R(U) - R(U^*) \geq \textstyle \frac{1}{2M}\cdot\mathbb{E}((\sum_{k=1}^mu^*_{\mathcal{X}_k} - \sum_{k=1}^mu_{\mathcal{X}_k})^2)$. Because $e^u$ is Lipschitz on bounded domains (i.e., $|e^u - e^v| \leq e^{\log M}\cdot|u - v|$, for all $-\log M \leq u,v \leq \log M$), we have that for any $U \in \Phi$, $R(U) - R(U^*) \geq \textstyle \frac{1}{2M^3}\cdot\mathbb{E}((\exp(\sum_{k=1}^mu^*_{\mathcal{X}_k}) - \exp(\sum_{k=1}^mu_{\mathcal{X}_k}))^2)$. Inverting the mapping $u_{\mathcal{X}_k} = \log \theta_{\mathcal{X}_k}$, which is possible because of \textbf{A1}, gives $R(\Theta) - R(\Theta^*) \geq \textstyle \frac{1}{2M^3}\cdot\mathbb{E}((\prod_{k=1}^m\theta^*_{\mathcal{X}_k} - \prod_{k=1}^m\theta_{\mathcal{X}_k})^2)$.$\qquad$
\end{proof}

\subsection{Risk Consistency via Rademacher Complexity}

Having shown that the loss function (\ref{eqn:nonconvex_risk}) has promising properties, we next identify sufficient conditions for risk consistency \cite{bartlett2002,greenshtein2004,kakade2009,lafferty2012}.  Our approach is to interpret the problem as a high-dimensional (though lower-dimensional than if we had not taken the low-rank tensor structure into consideration) linear regression under a Lipschitz loss function.  The linear regression will not be with respect to the indices $x$, but will instead be defined using indicator functions.  With this interpretation, we will use Rademacher averages \cite{bartlett2002,kakade2009,lafferty2012,boucheron2013} to bound the complexity of our model (\ref{eqn:nonconvex_risk}).  

\begin{proposition}
\label{prop:concentration}
Under $\mathbf{A1}$--$\mathbf{A3}$ and for any $\Gamma$, we have
\begin{equation}
\mathbb{P}\Big(\sup_{\Theta \in \Omega}\big|\hat{R}(\Theta) - R(\Theta)\big| < t\Big) \geq 1 - \exp\Big(-C_1n\Big(t-C_2\sqrt{\textstyle\frac{m\rho}{n}}\Big)^2\Big),
\end{equation}
where constants $C_1,C_2 > 0$ depend on $\mu,M$.
\end{proposition}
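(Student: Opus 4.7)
The plan is the standard Rademacher-complexity route: first apply a bounded-differences concentration inequality to reduce the supremum to its expectation, then use symmetrization and a contraction principle to bound that expectation by the Rademacher complexity of a linear class indexed by the parameters $U$, and finally exploit the simplicial-complex structure of $U$ to obtain the $\sqrt{m\rho/n}$ scaling.

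\textbf{Step 1 (bounded differences).} I will work on the reparametrized side, setting $u_{\mathcal{X}_k} = \log\theta_{\mathcal{X}_k}$ and using the equivalence (Proposition~\ref{proposition:convex_equivalence}) so that $\Theta\in\Omega$ becomes $U\in\Phi$. Under $\mathbf{A1}$--$\mathbf{A3}$ the per-sample loss $-y\langle i\rangle\sum_k u_{\mathcal{X}_k\langle i\rangle} + \exp(\sum_k u_{\mathcal{X}_k\langle i\rangle})$ is bounded in absolute value by some $V = O(\mu M\log M)$, because $|\sum_k u_{\mathcal{X}_k}|\leq\log M$ on $\Phi$ and $|y|\leq \mu M$. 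Hence the random variable $\Psi(\{(x\langle i\rangle,y\langle i\rangle)\}) := \sup_{\Theta\in\Omega}|\hat{R}(\Theta) - R(\Theta)|$ has bounded differences of order $2V/n$ in each coordinate. McDiarmid's inequality then yields
\begin{equation*}
\mathbb{P}\bigl(\Psi > \mathbb{E}[\Psi] + s\bigr) \leq \exp\bigl(-C_1 n s^2\bigr)
\end{equation*}
for a constant $C_1 = C_1(\mu,M) > 0$.

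\textbf{Step 2 (symmetrization and contraction).} Standard symmetrization bounds $\mathbb{E}[\Psi] \leq 2\mathfrak{R}_n(\mathcal{F})$, where $\mathcal{F} = \{(x,y)\mapsto -y\sum_k u_{\mathcal{X}_k} + \exp(\sum_k u_{\mathcal{X}_k}) : U\in\Phi\}$. Writing this as $g(s,y) = -ys + e^s$ composed with the linear map $s(x;U) = \sum_{k=1}^m u_{\mathcal{X}_k}$, and noting that $s$ is confined to $[-\log M,\log M]$ on $\Phi$ so that $g(\cdot,y)$ is Lipschitz in $s$ with constant $L = \mu M + M = O(\mu M)$, the Ledoux--Talagrand contraction principle gives
\begin{equation*}
\mathfrak{R}_n(\mathcal{F}) \leq L\cdot \mathfrak{R}_n(\mathcal{G}), \qquad \mathcal{G} = \bigl\{x\mapsto \textstyle\sum_{k=1}^m u_{\mathcal{X}_k} : U\in\Phi\bigr\}.
\end{equation*}

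\textbf{Step 3 (Rademacher complexity of the linear class).} This is the crux of the argument. The class $\mathcal{G}$ decomposes as a sum $\mathcal{G} = \sum_{k=1}^m \mathcal{G}_k$ where $\mathcal{G}_k = \{x\mapsto u^{(k)}_{\mathcal{X}_k} : |u^{(k)}_{\mathcal{X}_k}|\leq 2\log M\}$ is indexed independently over the $\rho_k = \prod_{j\in F_k} r_j$ coordinates of the $k$-th facet (I enlarge $\Phi$ by dropping the product constraint, which only increases the supremum). For each facet, the sup over $u^{(k)}$ separates coordinate-wise, so
\begin{equation*}
\mathfrak{R}_n(\mathcal{G}_k) = \tfrac{2\log M}{n}\,\mathbb{E}_\sigma\!\textstyle\sum_{\mathcal{X}_k\in\mathcal{R}_k}\bigl|\!\sum_{i:\mathcal{X}_k\langle i\rangle=\mathcal{X}_k}\!\sigma_i\bigr| \leq \tfrac{2\log M}{n}\sum_{\mathcal{X}_k}\sqrt{n_{\mathcal{X}_k}} \leq \tfrac{2\log M}{\sqrt n}\sqrt{\rho_k},
\end{equation*}
where Khintchine gives the first inequality and Cauchy--Schwarz (with $\sum_{\mathcal{X}_k} n_{\mathcal{X}_k} = n$) the second. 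Summing and applying Cauchy--Schwarz once more,
\begin{equation*}
\mathfrak{R}_n(\mathcal{G}) \leq \textstyle\sum_{k=1}^m \mathfrak{R}_n(\mathcal{G}_k) \leq \tfrac{2\log M}{\sqrt n}\,\sum_k\sqrt{\rho_k} \leq \tfrac{2\log M}{\sqrt n}\sqrt{m\rho}.
\end{equation*}
Combining with Step~2 yields $\mathbb{E}[\Psi] \leq C_2\sqrt{m\rho/n}$ for $C_2 = C_2(\mu,M)$, and plugging into the McDiarmid bound of Step~1 with $s = t - C_2\sqrt{m\rho/n}$ delivers the claimed inequality.

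The main obstacle is getting the correct $\sqrt{m\rho}$ dependence in Step~3: a naive union bound over the $\rho$ parameters would give a $\sqrt{\rho\log\rho}$-type rate, so it is essential to exploit both (i) that within a single facet the indicator features $\mathbbm{1}_{\mathcal{X}_k\langle i\rangle = \mathcal{X}_k}$ partition the samples (yielding the per-facet $\sqrt{\rho_k}$ via Khintchine plus Cauchy--Schwarz on the counts $n_{\mathcal{X}_k}$), and (ii) that the second Cauchy--Schwarz across facets trades a factor $\sqrt m$ for aggregating the $\sqrt{\rho_k}$'s. Managing the $\mu,M$-dependence of the Lipschitz constant and the McDiarmid differences is routine bookkeeping once the feasible set is bounded via Proposition~\ref{proposition:convex_equivalence}.
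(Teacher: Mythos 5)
Your proposal is correct and follows the same architecture as the paper's proof: reduce to the reparametrized class $\Phi$, apply McDiarmid's inequality to concentrate the supremum around its mean, use symmetrization and the Lipschitz contraction principle (with $L = \mu M + M$, valid because $\sum_k u_{\mathcal{X}_k}\in[-\log M,\log M]$ on $\Phi$) to reduce to the Rademacher complexity of the linear class in the indicator pseudo-predictors, and bound that complexity by $O(\log M)\sqrt{m\rho/n}$. The only place you diverge is the final sub-step: the paper obtains $\mathsf{R}(\mathsf{F}_\mathsf{W})\leq 2\log M\sqrt{m\rho/n}$ by observing $\|\chi\|_2=\sqrt m$, $\|U\|_2\leq 2\log M\sqrt{\rho}$ and invoking the generic linear-class bound of Kakade et al., whereas you derive the same quantity from scratch by decomposing the class facet-by-facet, using that within a facet the indicators $\mathbbm{1}_{\mathcal{X}_k\langle i\rangle=\mathcal{X}_k}$ partition the samples (Khintchine plus Cauchy--Schwarz on the counts gives the per-facet $\sqrt{\rho_k/n}$), and then a second Cauchy--Schwarz across facets trades $\sum_k\sqrt{\rho_k}$ for $\sqrt{m\rho}$. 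Your version is self-contained and makes the respective roles of $m$ and $\rho$ more transparent, at the cost of a slightly longer computation; both yield identical constants up to the unimportant factors absorbed into $C_2$.
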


\begin{proof}
The proof proceeds similarly to \cite{bartlett2002,lafferty2012} by bounding the deviation of the supremum from the expectation of the supremum, and it will be easier to work in the reparametrized space.  First, note that $\hat{R}(U)$ satisfies the bounded deviation condition with constant $(\mu M\log M + M)/n$ \cite{boucheron2013} because of \textbf{A1},\textbf{A2}.  As a result, McDiarmid's inequality \cite{boucheron2013} gives
\begin{equation}
\label{eqn:mcdiarmids}
\mathbb{P}\Big(\sup_{U \in \Phi} \big|\Delta(U)\big| - \mathbb{E}\big(\sup_{U \in \Phi}\big|\Delta(U)\big|\big) > t\Big) \leq \exp\Big(\textstyle\frac{-2nt^2}{(\mu M\log M + M)^2}\Big),
\end{equation}
where $\Delta(U) = \hat{R}(U) - R(U)$.  And so the result follows if we can bound the quantity $\mathbb{E}(\sup_{U \in \Phi}|\Delta(U)|)$.  Because the loss function $\phi(z) = -yz + e^z$ (for a fixed value of $y$ and for $z \in [-\log M,\log M]$) is Lipschitz with respect to $z$ with Lipschitz constant $L = \mu M + M$, structural results \cite{ledoux1991,bartlett2002} give that 
\begin{equation}
\label{eqn:radup}
\mathbb{E}\Big(\sup_{U \in \Phi}\big|\Delta(U)\big|\Big) \leq 4L\cdot\mathsf{R}(\mathsf{F}_\mathsf{W}),
\end{equation}
where $\mathsf{R}(\mathsf{F}_\mathsf{W})$ is the Rademacher complexity for an appropriate linear function class.  In particular, we can define our empirical loss by taking the sample average of $\phi$ composed with the linear model $\textstyle\sum_{k=1}^m\sum_{\mathcal{X}_k \in \mathcal{R}_k} \mathbbm{1}_{\mathcal{X}_k = \mathcal{X}_k\langle i\rangle}\cdot u_{\mathcal{X}_k}$. We should interpret the terms $\mathbbm{1}_{\mathcal{X}_k = \mathcal{X}_k\langle i\rangle}$ as pseudo-predictors, and the $u_{\mathcal{X}_k}$ are still the parameters.  The key observation is that if we define $\chi \in \{0,1\}^\rho$ to be the vector of pseudo-predictors, then in fact $\|\chi\|_1 = m$, $\|\chi\|_2 = \sqrt{m}$, and $\|\chi\|_\infty = 1$.  Recall that $\Phi$ is defined so that $\|u_{\mathcal{X}_k}\|_\infty \leq 2\log M$.  And so results from \cite{kakade2009} imply that $\mathsf{R}(\mathsf{F}_\mathsf{W}) \leq 2\log M\cdot\sqrt{m\rho/n}$. The result follows by combining this with (\ref{eqn:mcdiarmids}) and (\ref{eqn:radup}).$\qquad$
\end{proof}

The above result can be used to show risk consistency of the $\epsilon$-solution $\hat{\Theta}^\epsilon$ to the best approximate hierarchical decomposition problem (\ref{eqn:nonconvex}):

\begin{theorem}
\label{thm:est_consistent}
Under $\mathbf{A1}$--$\mathbf{A3}$ and for any $\Gamma$, with probability at least $1 - c_1n^{-1}$ we have $0 \leq R(\hat{\Theta}^\epsilon) - R(\Theta^*) \leq \sqrt{m \rho\log n/n} + \epsilon$, where constant $c_1 > 0$ depends on $\mu,M$.
\end{theorem}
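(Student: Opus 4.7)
The plan is to combine a standard excess-risk decomposition with the uniform deviation bound already established in Proposition \ref{prop:concentration}. The lower bound $0 \leq R(\hat{\Theta}^\epsilon) - R(\Theta^*)$ is immediate from the definition of $\Theta^*$ as a minimizer of $R$ over $\Omega$, together with the fact that $\hat{\Theta}^\epsilon \in \Omega$ (which follows from $\hat{\Theta}^\epsilon$ being an $\epsilon$-solution under the definition in the paragraph preceding Proposition \ref{proposition:convex}). So all the work is in the upper bound.

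For the upper bound, I would write
\begin{align*}
R(\hat{\Theta}^\epsilon) - R(\Theta^*) &= \bigl[R(\hat{\Theta}^\epsilon) - \hat{R}(\hat{\Theta}^\epsilon)\bigr] + \bigl[\hat{R}(\hat{\Theta}^\epsilon) - \hat{R}(\hat{\Theta})\bigr] \\
&\quad + \bigl[\hat{R}(\hat{\Theta}) - \hat{R}(\Theta^*)\bigr] + \bigl[\hat{R}(\Theta^*) - R(\Theta^*)\bigr].
\end{align*}
The second bracket is at most $\epsilon$ by the defining property of an $\epsilon$-solution. The third bracket is non-positive because $\hat{\Theta}$ minimizes $\hat{R}$ over $\Omega$ and $\Theta^* \in \Omega$. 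The first and fourth brackets are each bounded above by $\sup_{\Theta \in \Omega}|\hat{R}(\Theta) - R(\Theta)|$. Thus
\begin{equation*}
R(\hat{\Theta}^\epsilon) - R(\Theta^*) \leq 2\sup_{\Theta \in \Omega}\bigl|\hat{R}(\Theta) - R(\Theta)\bigr| + \epsilon.
\end{equation*}

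The remaining step is to invoke Proposition \ref{prop:concentration} to control $\sup_{\Theta \in \Omega}|\hat{R}(\Theta) - R(\Theta)|$. I would pick $t = C_2\sqrt{m\rho/n} + \sqrt{\log n/(C_1 n)}$ so that the exponential tail in Proposition \ref{prop:concentration} becomes at most $1/n$. Absorbing the constants $C_1, C_2$ (which depend only on $\mu, M$) into a single constant $c_1$ that appears in the failure probability, and using $\sqrt{m\rho/n} \leq \sqrt{m\rho\log n/n}$ and $\sqrt{\log n/n} \leq \sqrt{m\rho\log n/n}$ to combine the two pieces of $2t$ into a single $\sqrt{m\rho\log n/n}$ term, the upper bound becomes $\sqrt{m\rho\log n/n} + \epsilon$ as claimed.

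There is no real obstacle here beyond bookkeeping: all the heavy lifting (McDiarmid's inequality, Rademacher-complexity bound for the linear function class, Lipschitz composition) is already done in Proposition \ref{prop:concentration}, and the excess-risk decomposition is routine symmetric to the standard ERM argument. The only care needed is to verify that $\hat{\Theta}^\epsilon$ lies in $\Omega$ (so that the supremum inequalities apply to it) and to absorb the $\mu, M$-dependent constants from Proposition \ref{prop:concentration} into the constant $c_1$ in the statement, which is the implicit convention throughout this section.
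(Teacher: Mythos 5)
Your proposal is correct and follows essentially the same route as the paper: the standard empirical-risk-minimization excess-risk decomposition (yours in four brackets, the paper's in two terms with the $\epsilon$-solution inequality $\hat{R}(\hat{\Theta}^\epsilon) \leq \hat{R}(\Theta^*) + \epsilon$ folded in), followed by an application of Proposition \ref{prop:concentration} with $t$ chosen so the failure probability is $O(n^{-1})$. The only differences are presentational — you are slightly more explicit about the choice of $t$ and about absorbing the factor of $2$ and the $\mu,M$-dependent constants, which the paper leaves implicit.
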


\begin{proof}
The proof follows that in \cite{greenshtein2004} with modifications to extend the argument for $\epsilon$-solutions.  The triangle inequality implies $|R(\hat{\Theta}^\epsilon) - R(\Theta^*)| \leq |R(\hat{\Theta}^\epsilon) - \hat{R}(\hat{\Theta}^\epsilon)| + |R(\Theta^*) - \hat{R}(\hat{\Theta}^\epsilon)|$, and so we need to bound these two terms.  The first term $|R(\hat{\Theta}^\epsilon) - \hat{R}(\hat{\Theta}^\epsilon)|$ is bounded by Proposition \ref{prop:concentration}, and so we only need to focus on the second term $|R(\Theta^*) - \hat{R}(\hat{\Theta}^\epsilon)|$.  Because the quantity $\hat{\Theta}^\epsilon$ is an $\epsilon$-solution to an optimization problem with objective function $\hat{R}(\cdot)$, we have $\hat{R}(\hat{\Theta}^\epsilon) \leq \hat{R}(\Theta^*) + \epsilon \Rightarrow \hat{R}(\hat{\Theta}^\epsilon) - R(\Theta^*) \leq \hat{R}(\Theta^*) - R(\Theta^*) + \epsilon$.  Similarly, because $\Theta^*$ is the minimizer of $R(\cdot)$, we have $R(\Theta^*) \leq R(\hat{\Theta}^\epsilon) \Rightarrow \hat{R}(\hat{\Theta}^\epsilon) - R(\Theta^*) \geq \hat{R}(\hat{\Theta}^\epsilon) - R(\hat{\Theta}^\epsilon)$.  The result follows from combining the above with Proposition \ref{prop:concentration}.$\qquad$
\end{proof}

\subsection{Polynomial-Time Hierarchical Decompositions and Approximations}

We are now in a position to provide a randomized algorithm that can compute a hierarchical decomposition with time that is polynomial in $\rho(\Gamma)$, as opposed to the algorithm given in the proof of Proposition \ref{proposition:well-posed} that is polynomial in $\prod r_i$.  Let $\delta$ be a parameter that controls the approximation accuracy of the decomposition.  Then given any $\Gamma$, the algorithm is as follows:
\begin{enumerate}
\item Set $n = \rho(\Gamma)/\delta$;
\item Sample indices $x\langle i\rangle \in\mathcal{R}$ and record the corresponding tensor entries $y\langle i\rangle$, for $i=1,\ldots,n$;
\item Compute $\hat{U}^\epsilon$ by solving (\ref{eqn:nonconvex});
\item Compute $\hat{\Theta}^\epsilon$ by using the mapping $u_{\mathcal{X}_k} = \log \theta_{\mathcal{X}_k}$.
\end{enumerate}

We will use Proposition \ref{prop:weak_equivalence} and Theorem \ref{thm:est_consistent} to reason about errors measured using the squared loss function.  Recall that $\delta$ controls the approximation accuracy of the decomposition, and the value $\epsilon$ controls the accuracy of the optimization solution.

\begin{theorem}
\label{thm:ranres}
Suppose $\mathbf{A1}$--$\mathbf{A3}$ hold, $\Gamma$ is correct for $\psi_x$, and the indices $x\langle i\rangle$ are sampled uniformly from $\mathcal{R}$.  Then with probability at least $1 - c_1\delta/\rho$ the above algorithm computes a hierarchical decomposition $\hat{\Theta}^\epsilon$ with average approximation error
\begin{equation}
\label{eqn:approx_error}
\textstyle(\prod r_i)^{-1}\sum_{x\in\mathcal{X}}\big(\big(\psi_x - \prod_{k=1}^m\hat{\theta}^\epsilon_{\mathcal{X}_k}\big)^2\big) \leq 2M^3\big(\sqrt{m \delta\log (\rho/\delta)}+\epsilon\big)
\end{equation}
and has a polynomial-time arithmetic cost $O(1)(\rho/\delta)^{7/2}\log\big(\textstyle\frac{1}{\delta\epsilon}(\mu\rho^2 M\log M)\big)$, where constant $c_1 > 0$ depends on $\mu,M$.
\end{theorem}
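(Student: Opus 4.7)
The plan is to chain together three results already established in the paper: the convex-program complexity bound (Proposition \ref{proposition:convex}), the risk-consistency estimate (Theorem \ref{thm:est_consistent}), and the quadratic lower bound on excess risk (Proposition \ref{prop:weak_equivalence}), after exploiting the fact that a correct $\Gamma$ gives an oracle parameter $\Theta^*$ that actually recovers $\psi_x$.

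First I would handle the statistical half. Since $\Gamma$ is correct, Proposition \ref{prop:oracle_equiv} yields $\psi_x = \prod_{k=1}^m \theta^*_{\mathcal{X}_k}$ for any oracle minimizer $\Theta^* \in \arg\min\{R(\Theta) : \Theta \in \Omega\}$. Plugging $n = \rho/\delta$ into Theorem \ref{thm:est_consistent}, with probability at least $1 - c_1 \delta/\rho$ one has
\begin{equation*}
0 \leq R(\hat{\Theta}^\epsilon) - R(\Theta^*) \leq \sqrt{m \delta \log(\rho/\delta)} + \epsilon.
\end{equation*}
Proposition \ref{prop:weak_equivalence}, applied to $\hat{\Theta}^\epsilon \in \Omega$, then gives
\begin{equation*}
\textstyle \frac{1}{2M^3}\cdot \mathbb{E}\bigl(\bigl(\prod_{k=1}^m \hat{\theta}^\epsilon_{\mathcal{X}_k} - \prod_{k=1}^m \theta^*_{\mathcal{X}_k}\bigr)^2\bigr) \leq R(\hat{\Theta}^\epsilon) - R(\Theta^*).
\end{equation*}
Because $\Gamma$ is correct we can replace $\prod \theta^*_{\mathcal{X}_k}$ by $\psi_x$ inside the expectation, and because $x\langle i\rangle$ is sampled uniformly from $\mathcal{R}$ the expectation over $x$ equals the uniform average $\frac{1}{\prod r_i}\sum_{x \in \mathcal{R}}$. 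Rearranging and combining the two displays yields the approximation bound (\ref{eqn:approx_error}) on the event of probability at least $1 - c_1\delta/\rho$.

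For the computational half, I would apply Proposition \ref{proposition:convex} directly. Steps 1--2 of the algorithm are trivially $O(n) = O(\rho/\delta)$, and Step 4 is $O(\rho)$ evaluations of the logarithm. The dominant cost is Step 3, where we solve (\ref{eqn:nonconvex}) to $\epsilon$-accuracy with $n = \rho/\delta$ measurements. Substituting $n = \rho/\delta$ into the bound
\begin{equation*}
O(1)(\rho^3 + n^3)\sqrt{\rho+n}\,\log\bigl(\textstyle\frac{1}{\epsilon}(\mu \rho M \log M)(\rho+n)\bigr)
\end{equation*}
from Proposition \ref{proposition:convex} and using $\delta \leq 1$ (so $\rho/\delta \geq \rho$) collapses the sum $\rho^3 + (\rho/\delta)^3$ to $O((\rho/\delta)^3)$ and $\sqrt{\rho + \rho/\delta}$ to $O((\rho/\delta)^{1/2})$, giving the claimed $O(1)(\rho/\delta)^{7/2}\log\bigl(\frac{1}{\delta\epsilon}(\mu\rho^2 M \log M)\bigr)$ arithmetic cost.

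The only genuinely delicate step is the translation from the expectation bound to the uniform average: this requires the hypothesis that $x\langle i\rangle$ is sampled uniformly from $\mathcal{R}$ so that $\mathbb{E}[(\prod\theta_{\mathcal{X}_k} - \psi_x)^2] = \frac{1}{\prod r_i}\sum_{x\in\mathcal{R}}(\prod\theta_{\mathcal{X}_k} - \psi_x)^2$, and it requires invoking the correctness of $\Gamma$ to identify $\prod\theta^*_{\mathcal{X}_k}$ with $\psi_x$. Once these two identifications are noted, everything else is bookkeeping: a union of the probability bound from Theorem \ref{thm:est_consistent} with the deterministic inequality of Proposition \ref{prop:weak_equivalence}, followed by substitution of $n = \rho/\delta$ into both the statistical and the complexity bounds.
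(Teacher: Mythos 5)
Your proposal is correct and follows essentially the same route as the paper: combine Theorem \ref{thm:est_consistent} (with $n=\rho/\delta$) and Proposition \ref{prop:weak_equivalence}, use Proposition \ref{prop:oracle_equiv} to replace $\prod_k\theta^*_{\mathcal{X}_k}$ by $\psi_x$ under a correct $\Gamma$, convert the expectation to a uniform average over $\mathcal{R}$, and invoke Proposition \ref{proposition:convex} for the arithmetic cost. The only difference is that you spell out the intermediate chaining and the cost of the non-optimization steps more explicitly than the paper does, which is harmless.
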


\begin{proof}
We can replace $\prod_{k=1}^m\theta^*_{\mathcal{X}_k}$ with $\psi_x$ because of Proposition \ref{prop:oracle_equiv}.  Combining Theorem \ref{thm:est_consistent} with Proposition \ref{prop:weak_equivalence} gives $\frac{1}{2M^3}\cdot \textstyle\mathbb{E}((\prod_{k=1}^m\hat{\theta}_{\mathcal{X}_k}^\epsilon - \psi_x)^2)\leq\sqrt{m \delta\log (\rho/\delta)}+\epsilon$ with probability at least $1 - c_1\delta/\rho$.  When $x\langle i\rangle$ is uniformly sampled, this expectation can be written as $\mathbb{E}((\psi_x - \textstyle\prod_{k=1}^m\hat{\theta}^\epsilon_{\mathcal{X}_k})^2) = (\prod r_i)^{-1}\sum_{x\in\mathcal{X}}\textstyle((\psi_x - \prod_{k=1}^m\hat{\theta}^\epsilon_{\mathcal{X}_k})^2)$.  Proposition \ref{proposition:convex} states $\hat{\Theta}^\epsilon$ has arithmetic cost $O(1)(\rho/\delta)^{7/2}\log(\textstyle\frac{1}{\epsilon}(\mu\rho^2 M\log M/\delta))$.$\qquad$
\end{proof}

When $\Gamma$ is incorrect for $\psi_x$, only a weaker result is possible because it will be the case that $\psi_x \neq \prod_{k=1}^m\theta^*_{\mathcal{X}_k}$ for the oracle parameters $\Theta^*$.  In other words, the previous result states we can compute hierarchical decompositions (with a correct $\Gamma$) in polynomial time using an amount of data that depends on effective dimension $\rho$ (rather than the number of tensor entries $\prod r_i$), whereas the following result states we can compute best tensor approximations (where the approximation has a hierarchical decomposition given by a potentially incorrect $\Gamma$) in polynomial time using an amount of data that depends on effective dimension $\rho$.

\begin{theorem}
\label{thm:ranres2}
Suppose $\mathbf{A1}$--$\mathbf{A3}$ hold, and the indices $x\langle i\rangle$ are sampled uniformly from $\mathcal{R}$.  Then with probability at least $1 - c_1\delta/\rho$ the above algorithm computes an approximate hierarchical decomposition $\hat{\Theta}^\epsilon$ with average approximation error
\begin{equation}
\textstyle(\prod r_i)^{-1}\sum_{x\in\mathcal{X}}\textstyle\big(\big(\prod_{k=1}^m\theta^*_{\mathcal{X}_k} - \prod_{k=1}^m\hat{\theta}^\epsilon_{\mathcal{X}_k}\big)^2\big) \leq 2M^3\big(\sqrt{m \delta\log (\rho/\delta)}+\epsilon\big)
\end{equation}
and has a polynomial-time arithmetic cost $O(1)(\rho/\delta)^{7/2}\log\big(\textstyle\frac{1}{\delta\epsilon}(\mu\rho^2 M\log M)\big)$, where constant $c_1 > 0$ depends on $\mu,M$.
\end{theorem}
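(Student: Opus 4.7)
The plan is to mirror the proof of Theorem \ref{thm:ranres}, but to stop one step earlier since we no longer assume $\Gamma$ is correct and thus cannot replace the oracle product by $\psi_x$. In other words, the quantity we measure the error against changes from $\psi_x$ to $\prod_{k=1}^m\theta^*_{\mathcal{X}_k}$, but the chain of inequalities is otherwise the same.

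Concretely, I would first plug the choice $n = \rho(\Gamma)/\delta$ from step (1) of the algorithm into Theorem \ref{thm:est_consistent}. This yields, with probability at least $1 - c_1\delta/\rho$, the bound
\begin{equation*}
0 \leq R(\hat{\Theta}^\epsilon) - R(\Theta^*) \leq \sqrt{\frac{m\rho\log n}{n}} + \epsilon = \sqrt{m\delta\log(\rho/\delta)} + \epsilon.
\end{equation*}
Next I would invoke Proposition \ref{prop:weak_equivalence}, applied to $\hat{\Theta}^\epsilon \in \Omega$, which gives
\begin{equation*}
\frac{1}{2M^3}\cdot\mathbb{E}\Big(\big(\textstyle\prod_{k=1}^m\hat{\theta}^\epsilon_{\mathcal{X}_k} - \prod_{k=1}^m\theta^*_{\mathcal{X}_k}\big)^2\Big) \leq R(\hat{\Theta}^\epsilon) - R(\Theta^*).
\end{equation*}
Chaining the two inequalities gives the desired squared-error bound, up to the factor of $2M^3$.

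The final bookkeeping is to convert the expectation into an unweighted average over $\mathcal{R}$. Because step (2) samples $x\langle i\rangle$ uniformly from $\mathcal{R}$, the expectation with respect to $x$ is simply $(\prod r_i)^{-1}\sum_{x\in\mathcal{R}}(\,\cdot\,)$, yielding the stated bound. The arithmetic cost follows directly from Proposition \ref{proposition:convex} with $n=\rho/\delta$ substituted: $O(1)((\rho/\delta)^3+(\rho/\delta)^3)\sqrt{\rho/\delta+\rho/\delta}\log(\cdot)$ collapses to $O(1)(\rho/\delta)^{7/2}\log\big(\frac{1}{\delta\epsilon}(\mu\rho^2M\log M)\big)$ after absorbing constants. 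Step (4) of the algorithm (applying $\theta_{\mathcal{X}_k}=\exp(u_{\mathcal{X}_k})$) is where the $\epsilon$-solution in $U$ is transported back to an $\epsilon$-solution in $\Theta$, as justified in the proof of Proposition \ref{proposition:convex}.

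There is no genuine obstacle here beyond bookkeeping: the only step in Theorem \ref{thm:ranres} that used correctness of $\Gamma$ was the appeal to Proposition \ref{prop:oracle_equiv} to substitute $\prod_{k=1}^m\theta^*_{\mathcal{X}_k} = \psi_x$, and we simply omit this substitution. The rest (McDiarmid/Rademacher concentration in Proposition \ref{prop:concentration}, first-order-optimality-plus-strong-convexity in Proposition \ref{prop:weak_equivalence}, and the interior-point complexity in Proposition \ref{proposition:convex}) is stated in sufficient generality (``for any $\Gamma$, not necessarily a correct $\Gamma$'') to carry through verbatim.
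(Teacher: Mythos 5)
Your proposal is correct and matches the paper exactly: the paper's proof of this theorem simply points back to the proof of Theorem \ref{thm:ranres}, whose intermediate bound (\ref{eqn:bound1}) -- obtained by chaining Theorem \ref{thm:est_consistent} with Proposition \ref{prop:weak_equivalence} before invoking Proposition \ref{prop:oracle_equiv} -- is precisely the claimed inequality. Your observation that omitting the $\psi_x$ substitution is the only change needed is exactly the intended argument.
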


\begin{proof}
This result is proved in the proof of Theorem \ref{thm:ranres}.$\qquad$
\end{proof}

One of the implications of this is that we can compute the best (as measured by the loss (\ref{eqn:nonconvex_risk})) approximate hierarchical decomposition of $\psi$ in an amount of time that is polynomial in the effective dimension $\rho$ induced by the simplicial complex $\Gamma$.  We can also specialize these results to rank-1 approximations.

\begin{corollary}
The best rank-1 approximation, under the loss function $(\ref{eqn:nonconvex_risk})$ and satisfying $\mathbf{A1}$, of a tensor $\psi$ can be computed in polynomial time with a number of arithmetic steps that is polynomial in $r,p,\mu,M$.
\end{corollary}

\begin{proof}
The best rank-1 approximation corresponds to a partition decomposition with $\text{facets}(\Gamma) = \{\{1\},\ldots,\{p\}\}$, and so the result follows from Theorem \ref{thm:ranres2}.$\qquad$
\end{proof}

\section{Tensor Completion}

\label{section:tc}

The tensor completion problem is almost the same as computing a hierarchical decomposition using data samples of tensor entries, and the only difference is we must also determine $\Gamma$ using the measured data before computing the tensor approximation.  A key assumption for formulating completion problems is the object being estimated has low rank, because this allows for statistically consistent estimation with a small number of measurements. The usual idea for solving the completion problem is to formulate an optimization problem in which the rank (or a rank surrogate) of the estimated object is minimized subject to the entries that have been observed being equal to the corresponding entries in the estimated object.

Matrix completion is well-studied \cite{fazel2001,zou2006,recht2010,cai2010,candes2011,ma2011,agarwal2012,saunderson2012,keshavan2010,chatterjee2014,gavish2014}, but tensor completion is still an open problem.  Tensor rank is NP-hard to compute \cite{hillar2013} and has poor continuity properties \cite{desilva2008}.  As a result, existing approaches use multilinear rank as a surrogate for tensor rank because it can be computed in polynomial time \cite{mu2014} and has better continuity properties than tensor rank \cite{desilva2008}.  The multilinear (or Tucker) rank of tensor $\psi$ is the vector: $\text{rank}_\boxplus(\psi) = (\text{rank}(\psi_{(1)}), \ldots,\text{rank}(\psi_{(p)}))$, where $\text{rank}(\cdot)$ is the standard matrix rank and $\psi_{(k)}$ is the unfolding of the tensor (into a matrix) along the $k$-th index \cite{desilva2008,mu2014}.  

Several approaches to tensor completion \cite{tomioka2010,signoretto2010,gandy2011,liu2013,mu2014,zhang2014,montanari2014} use soft-thresholding on the multilinear rank, because this converts the problem into the well-studied matrix completion problem.  A canonical formulation \cite{tomioka2010,gandy2011} is to solve $\min_{\hat{\psi}} \frac{1}{n}\sum_{i=1}^n (y\langle i\rangle - \hat{\psi}_{x\langle i\rangle})^2 + \sum_{k=1}^p\lambda_k\cdot\|\hat{\psi}_{(k)}\|_*$, where $\|\cdot\|_*$ denotes the nuclear norm and $\lambda_k > 0$ are weightings.  There are open questions on the optimal weighting $\lambda_k$ in this formulation \cite{oymak2012,mu2014,zhang2014}.  Another class of approaches use power iteration, message passing, or alternating minimization algorithms \cite{goldfarb2014,montanari2014}; these are local approaches that are not guaranteed to provide statistical consistency in general, though they can empirically work well on specific problem instances.

However, there is a large gap with the statistical convergence rates achievable by the above methods \cite{tomioka2010,signoretto2010,gandy2011,liu2013,mu2014,zhang2014,montanari2014}.  If we define $\pi = \max_j\text{rank}_\boxplus(\psi)$, then existing convex optimization-based algorithms need $O(\pi^{\lfloor p/2\rfloor}r^{\lceil p/2\rceil})$ measurements; this is substantially worse than the best rates achievable by an NP-hard formulation, which needs $O(\pi^p + r\pi p)$ measurements \cite{mu2014}.  Another analysis focusing on rank-1 tensors found that existing local approaches like power iteration and message passing need a diverging signal-to-noise ratio to achieve statistical consistency \cite{montanari2014}.

Here, we use effective dimension $\rho(\Gamma)$ of a partition $\Gamma$ as a surrogate for tensor rank.  As discussed in \S \ref{section:erc}, $\rho$ majorizes tensor rank, and so a small $\rho$ corresponds to a low tensor rank.  We will focus on the case where $\Gamma$ is a partition and leave open the more general case of a $\Gamma$ that is a general simplicial complex.  One advantage of developing tensor completion algorithms using effective dimension is that we will be able to show this leads to methods that achieve statistical consistency using only slightly more measurements than NP-hard formulations in specific cases.

The section begins by discussing which correct partition would be best for statistical purposes.  Next, we define a test statistic that can distinguish whether two indices belong to the same or different facets of this partition.  This statistic can be used to construct a partition $\Gamma$ from the data, and we consequently use this test statistic to construct a polynomial-time algorithm for tensor completion.  A theoretical analysis shows our algorithm needs exponentially less data for specific cases than existing tensor completion methods \cite{tomioka2010,signoretto2010,gandy2011,liu2013,mu2014,zhang2014,montanari2014}.  The section concludes by discussing how to modify the algorithm to handle a tradeoff between purposely choosing an incorrect partition (which increases statistical bias) in exchange for significant reduction in the effective dimension (which reduces statistical variance).

Before we begin, we make a note regarding the computational complexity of computing estimates by solving optimization problems.  It is typical in the statistics literature \cite{fazel2001,zou2006,recht2010,cai2010,candes2011,ma2011,agarwal2012,saunderson2012,keshavan2010,chatterjee2014,gavish2014,tomioka2010,signoretto2010,gandy2011,liu2013,mu2014,zhang2014,montanari2014} to not make a distinction between $\epsilon$-solutions and exact solutions to optimization problems.  The reason is that the $\epsilon$-solutions generally add only an $\epsilon$ term in upper bounds on error (see for instance Theorems \ref{thm:ranres} and \ref{thm:ranres2}).  In this section, we follow this convention from statistics and do not distinguish between $\epsilon$-solutions and exact solutions.  Polynomial time computation is ensured in our case because of Proposition \ref{proposition:convex}.

\subsection{Specifying the Ideal Partition}

Despite the lack of uniqueness of correct $\Gamma$ for a tensor $\psi_x$, some correct $\Gamma$ are better than others.  Statistically, a correct $\Gamma$ with the smallest effective dimension $\rho(\Gamma)$ is the best choice because this reduces the number of parameters to estimate and leads to more efficient methods.  As a result, we define the \emph{ideal partition} $\Gamma^*$ to be a partition such that $\rho(\Gamma^*) \leq \rho(\Gamma)$ for all correct partitions $\Gamma$.  An ideal partition must always exist because $\text{facets}(\Gamma) = \{\{1,\ldots,p\}\}$ is a correct partition, but an ideal partition may not always have low effective dimension: In a subsequent subsection, we will discuss low-rank approximations that can be used for this situation.  However, if an ideal partition exists then it must be unique because otherwise we could use all ideal partitions to define a new correct partition $\Gamma$ with a strictly smaller effective dimension $\rho(\Gamma)$.  Additionally, because the loss function (\ref{eqn:nonconvex_risk}) is a Bregman divergence, it is known \cite{banerjee2005} that the minimal possible risk is
\begin{equation}
\label{eqn:bregman}
R(\psi_x) := \arg\min \big\{R(\Theta)\ \big|\ \Theta \in \Omega,\ \forall \text{ choices of } \Gamma\big\} = R(\Theta^*(\Gamma^*)),
\end{equation}
where $R(\Theta^*(\Gamma^*))$ denotes the minimum risk under the ideal partition $\Gamma^*$.

\subsection{Defining the Risk Gap}

We begin by making a minor assumption about the distribution of the predictors:\\

\noindent \textbf{A4}. The $x\langle i\rangle$ are iid random variables with distribution such that $x_u\langle i\rangle$ is independent of $x_v\langle i\rangle$ whenever $u \in F_j$ and $v \in F_k$, where $F_j,F_k \in \text{facets}(\Gamma^*)$ and $F_j\neq F_k$.\\

\noindent This independence assumption is similar to assumptions typically made for low-rank matrix and tensor completion (e.g., \cite{tomioka2010,chatterjee2014,mu2014}).  The typical assumption is that entries of the matrix (or tensor) are sampled with uniform probability, which is equivalent to assuming the $x_u\langle i\rangle$ are jointly independent \cite{landsberg2012}.  Here, we only require independence between indices that belong to different facets of $\Gamma^*$.  It is useful to emphasize that uniform sampling of entries would satisfy our assumption.

This assumption could be generalized to include approximately independent distributions.  For instance, consider the distribution on $x$ given by: $f_x = \textstyle(1-\epsilon)\cdot\bigotimes_{k=1}^p V_k + \epsilon \cdot g_x$, where $V_k \in \mathbb{R}^{r_k}$ are nonnegative vectors that sum to one $\sum_j V_k^j = 1$, $\epsilon \ll 1$ is a small constant, and $g_x$ is an arbitrary probability distribution on $x$.  Because $\bigotimes_{k=1}^p V_k$ represents a distribution where each $x_k$ is jointly independent \cite{landsberg2012}, we can interpret the distribution $f_x$ as having approximate independence between the $x_k$.  Under such conditions, we can bound the error incurred by our estimators assuming \textbf{A4}.  We do not consider the details of this generalization in this paper.

Our main idea is that we can determine structural properties of the ideal partition $\Gamma^*$ by computing quantities that are statistically easy to estimate.  In particular, we define the \emph{risk gap} of two indices $j,q$ to be the test statistic $\mathcal{G}_{jq} = \min\{\overline{R}_{jq}(\overline{B})\ |\ \overline{B} \in \overline{\Phi}\} - \min\{R_{jq}(B)\ |\ B \in \Phi\}$, where $\overline{\Phi}$ is the set (\ref{eqn:nonconvex_constraint}) for the partition $\{\{j\},\{q\}\}$, $\Phi$ in this case is the set (\ref{eqn:nonconvex_constraint}) for the partition $\{\{j,q\}\}$, $R_{jq}(B) = \mathbb{E}(-y\log (\beta_{x_j,x_q}) + \beta_{x_j,x_q})$, and $\overline{R}_{jq}(\overline{B}) = \mathbb{E}(-y\cdot\textstyle(\log \overline{\beta}_{x_j} + \log\overline{\beta}_{x_q}) + \overline{\beta}_{x_j}\overline{\beta}_{x_q})$.  Note that $\mathcal{G}_{jq} \geq 0$ because
\begin{equation}
\label{eqn:2dreform}
\min\big\{\overline{R}_{jq}(\overline{B})\ \big|\ \overline{B} \in \overline{\Phi}\big\} = \min\big\{R_{jq}(B)\ \big|\ B \in \Phi,\ \beta_{x_j,x_q} = \overline{\beta}_{x_j}\overline{\beta}_{x_q},\ \overline{B} \in \overline{\Phi}\}.
\end{equation}
The following structural characterization is essentially a corollary to results in \cite{banerjee2005}.

\begin{proposition}
\label{proposition:rank_one_risk}
Suppose $\mathbf{A1}$, $\mathbf{A2}$, $\mathbf{A4}$ hold and that $\Gamma^*$ is an ideal partition.  If indices $j,q$ are such that there is no $F_k\in\mathrm{facets}(\Gamma^*)$ with $j,q \in F_k$, then $\mathcal{G}_{jq} = 0$.
\end{proposition}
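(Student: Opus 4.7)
The inequality $\mathcal{G}_{jq}\geq 0$ is immediate from the reformulation $(\ref{eqn:2dreform})$, so the task is to exhibit a rank-$1$ point $\overline{B}=(\overline{\beta}_{x_j},\overline{\beta}_{x_q})\in\overline{\Phi}$ whose product matches the minimizer of $R_{jq}$ over the unstructured set $\Phi$. My plan is to show that this unstructured minimizer already factorizes into a function of $x_j$ times a function of $x_q$, and then to read off the two factors as the required witness.

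First, observe that $R_{jq}$ is the same Bregman-divergence loss as $R(\Theta)$, specialized to a two-index tensor with the trivially correct partition $\{\{j,q\}\}$. Proposition \ref{prop:oracle_equiv} then identifies its minimizer over $\Phi$ as the conditional mean $\beta^*_{x_j,x_q}=\mathbb{E}[y\mid x_j,x_q]=\mathbb{E}[\psi_x\mid x_j,x_q]$, where the last equality uses $\mathbb{E}[y\mid x]=\psi_x$ from \textbf{A2}. Next, expand $\psi_x=\prod_{k=1}^m\theta^*_{\mathcal{X}_k}$ via the hierarchical decomposition given by $\Gamma^*$. Let $F_a,F_b$ be the distinct facets of $\Gamma^*$ containing $j$ and $q$ respectively. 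By \textbf{A4}, the blocks $\mathcal{X}_{F_1},\ldots,\mathcal{X}_{F_m}$ are (mutually) independent, so conditional on $(x_j,x_q)$ the factor $\theta^*_{\mathcal{X}_a}$ depends only on $x_j$, the factor $\theta^*_{\mathcal{X}_b}$ depends only on $x_q$, and every remaining factor $\theta^*_{\mathcal{X}_k}$ is independent of $(x_j,x_q)$. Pushing the conditional expectation through the product yields
\begin{equation}
\beta^*_{x_j,x_q}=\bigl(\mathbb{E}[\theta^*_{\mathcal{X}_a}\mid x_j]\bigr)\cdot\bigl(\mathbb{E}[\theta^*_{\mathcal{X}_b}\mid x_q]\bigr)\cdot\prod_{k\neq a,b}\mathbb{E}[\theta^*_{\mathcal{X}_k}],
\end{equation}
which is a rank-$1$ product $f(x_j)\,g(x_q)$ after absorbing the trailing constant into either factor.

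It remains to certify that this factorization can be taken inside the box set $\overline{\Phi}$. Since $\beta^*_{x_j,x_q}$ is a conditional expectation of $\psi_x$, it inherits the bounds $M^{-1}\leq\beta^*_{x_j,x_q}\leq M$ from \textbf{A1}, so the two-index tensor $\beta^*$ itself satisfies \textbf{A1}; because the partition $\{\{j\},\{q\}\}$ is correct for $\beta^*$ by the factorization just derived, Proposition \ref{proposition:well-posed} delivers factors $(\overline{\beta}_{x_j},\overline{\beta}_{x_q})\in[M^{-2},M^2]$ that realize the same product. This $\overline{B}$ lies in $\overline{\Phi}$ and attains the unstructured minimum via $(\ref{eqn:2dreform})$, forcing $\mathcal{G}_{jq}=0$.

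The one delicate point is the independence step: \textbf{A4} is stated as pairwise independence between single indices in distinct facets, whereas the argument above uses joint independence of the whole blocks $\mathcal{X}_{F_k}$. I would either read \textbf{A4} as intending block-wise independence (consistent with the uniform-sampling example that is cited as the motivating case), or, if required, tighten the assumption explicitly; beyond that point, the argument is a direct application of Propositions \ref{prop:oracle_equiv} and \ref{proposition:well-posed}.
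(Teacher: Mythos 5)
Your proof is correct and follows essentially the same route as the paper's: identify the unstructured minimizer as the conditional mean $\mathbb{E}[y\mid x_j,x_q]$, factor it across the facets of $\Gamma^*$ using \textbf{A4}, and invoke Proposition \ref{proposition:well-posed} to realize the two factors inside $\overline{\Phi}$. Your caveat that the factorization step actually requires block-wise independence of the $\mathcal{X}_{F_k}$ rather than the pairwise single-index independence literally stated in \textbf{A4} is a fair observation about a gap the paper's own proof also passes over silently.
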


\begin{proof}
Because of \textbf{A4}, without loss of generality we have $F_1,F_2\in\text{facets}(\Gamma^*)$ such that $j \in F_1$, $q \in F_2$, and $F_1 \neq F_2$.  Next observe that $\beta_{x_j,x_q} = \mathbb{E}[y | x_j, x_q]$ minimizes $\min\{R_{jq}(B)\ |\ B \in \Phi\}$ \cite{banerjee2005}; this conditional expectation can be written as $\beta_{x_j,x_q} = \textstyle\mathbb{E}[\prod_{k = 1}^m\theta_{\mathcal{X}_k}|x_j,x_q] = \mathbb{E}[\theta_{\mathcal{X}_1}|x_j]\cdot\mathbb{E}[\theta_{\mathcal{X}_2}|x_q]\cdot\mathbb{E}[\prod_{k = 3}^m\theta_{\mathcal{X}_k}]$. Defining $v_{x_j} = \mathbb{E}[\theta_{\mathcal{X}_1}|x_j]$, $w_{x_q} = \mathbb{E}[\theta_{\mathcal{X}_2}|x_q]$, and $\kappa = \mathbb{E}[\prod_{k = 3}^m\theta_{\mathcal{X}_k}]$, we can write the conditional expectation as $\beta_{x_j,x_q} = \kappa\cdot v_{x_j} w_{x_q}$.  Because \textbf{A1} holds and $\Gamma^*$ is an ideal parition, this means the $\theta_{\mathcal{X}_k}$ are strictly positive.  As a result, we have that (i) $\kappa$ is a strictly positive constant, and (ii) the vectors $u_{x_j},v_{x_q}$ have strictly positive entries.  

Next, observe that if we can choose $\overline{\beta}_{x_j}$ and $\overline{\beta}_{x_q}$ such that $\beta_{x_j,x_q} = \overline{\beta}_{x_j}\overline{\beta}_{x_q}$ and $\overline{B} \in \overline{\Phi}$, then the result follows because the minimizer to $\min\{R_{jq}(B)\ |\ B \in \Phi\}$ also gives the minimizer to (\ref{eqn:2dreform}).  In fact, such a choice is guaranteed to exist by Proposition \ref{proposition:well-posed} applied to $\beta_{x_j,x_q}$, since we have the decomposition $\beta_{x_j,x_q} = \kappa\cdot u_{x_j} v_{x_q}$.$\qquad$
\end{proof}

This is a useful result because it says that important structural information is encoded in an object $\mathcal{G}_{jq}$ that is easy to estimate.  Unfortunately, the converse is not true.  Consider the counterexample with $p = 3$, $r_1=r_2=r_3=2$, and
\begin{equation}
\label{eqn:tensor_ex}
\begin{pmatrix}[c|c] \psi_{x_1,x_2,1} & \psi_{x_1,x_2,2} \end{pmatrix} = \begin{pmatrix}[cc|cc] 2 & 0 & 0 & 2\\ 0 & 2 & 2 & 0 \end{pmatrix},
\end{equation}
where the entries are measured uniformly.  It can be shown that $\mathcal{G}_{jq} = 0$, but the hypothesis of Proposition \ref{proposition:rank_one_risk} does not hold.  Consequently, we will have to restrict the class of low-rank tensors we consider by defining an incoherence condition:\\

\noindent \textbf{A5}. There exists $\alpha > 0$ such that $\mathcal{G}_{jq} \geq \alpha$ for all $j,q \in F_k$ and all $F_k \in \text{facets}(\Gamma^*)$.\\

Incoherence conditions are common in the matrix and tensor completion literature \cite{recht2010,cai2010,candes2011,ma2011,agarwal2012,saunderson2012,keshavan2010,chatterjee2014,gavish2014}.  One interpretation of \textbf{A5} is it forces $\mathcal{G}_{jq}$ to represent the difference in risk (flattened to just two variables) between keeping $x_j,x_q$ coupled versus decoupled in the ideal partition $\Gamma^*$.  The existence of tensors satisfying this condition can be seen by considering the second example from \S \ref{section:pd} in which $\psi_x = \prod_{k = 1}^m\theta_{\mathcal{X}_k} = (\theta^{(1)}\otimes\cdots\otimes\theta^{(m)})_x$, where $\theta^{(1)},\ldots,\theta^{(s)}$ are matrices and $\theta^{(s+1)},\ldots,\theta^{(m)}$ are vectors.  Assume that (i) the entries of $\theta^{(k)}$ lie within the set $[M^{-1/m},M^{1/m}]$, for all $k =1,\ldots,m$, (ii) we sample uniformly from the tensor, and (iii) there is a constant $\alpha > 0$ such that the singular values of each matrix (i.e., $k = 1,\ldots,s$) satisfy $\textstyle\sum_{\gamma \geq 2}\sigma_\gamma(\theta^{(k)}) \geq \sqrt{2}(rM)^{3/2}\sqrt{\alpha}$, where $\sigma_\gamma(\cdot)$ are the singular values sorted into decreasing order.  Then Proposition \ref{prop:equivalence} gives $\mathcal{G}_{jq} \geq \min \{\textstyle\frac{1}{2M^3}\cdot \mathbb{E}((\overline{\beta}_{x_j}\overline{\beta}_{x_q} - \mathbb{E}[y|x_j,x_q])^2)\ |\ B \in \overline{\Phi}\}\geq \textstyle\frac{1}{2(rM)^3}\cdot(\sum_{\gamma \geq 2}\sigma_\gamma(\theta^{(k)}))^2 = \alpha$, where we used that $\|A\|_* \leq \sqrt{r}\|A\|_{F}$ for a matrix $A$ with dimensions upper bounded by $r$, and that the probability of a single entry being observed when entries are observed uniformly is lower bounded by $1/r^2$.  Hence, these tensors satisfy \textbf{A5} by construction.

We lastly turn to the question of interpretation of the incoherence condition \textbf{A5}.  There is a large amount of incoherence in the above class of tensors because either two indices $j,q$ are decoupled because they lie in distinct facets or these indices jointly belong to the same facet that is decoupled from every other index.  Interpreted in this way, we can see why the example (\ref{eqn:tensor_ex}) displays pathological behavior: The value of an entry in the tensor $\psi$ is very sensitive to changes in $x_3$, and so the indices $1,2$ do not have sufficient incoherence from the index $3$ for our property \textbf{A5} to hold.

\subsection{Tensor Completion Algorithm for Low-Rank Ideal Partitions}
\label{section:tca}

As we have shown above, when \textbf{A1}--\textbf{A5} are satisfied, the risk gap $\mathcal{G}_{jq}$ is zero (non-zero) when the indices $j,q$ are decoupled (coupled) in the ideal partition $\Gamma^*$.  The idea of our algorithm is that we will use estimates of the risk gap $\hat{\mathcal{G}}_{jq}$ to construct an estimate of the ideal partition $\hat{\Gamma}$, and this will lead to a consistent estimation procedure because estimates of the risk gap converge significantly faster than estimates of the completed tensor.  Let $t_n$ be a threshold. The steps are:
\begin{enumerate}
\item Define the initial partition to be $\text{facets}(\hat{\Gamma}) = \{\{1\}\}$.  The remaining variables will be subsequently added to the partition.
\item For the variables indicated by $j = 2,\ldots,p$, do the following:
\begin{enumerate}
\item For the partitions represented by $k = 1,\ldots,\#\text{facets}(\hat{\Gamma})$
\begin{enumerate}
\item Let $q = (F_k)_1$ and compute the empirical version of the risk gap: $\hat{\mathcal{G}}_{jq} = \min\{\hat{\overline{R}}_{jq}(\overline{B})\ |\ \overline{B} \in \overline{\Phi}\} - \min\{\hat{R}_{jq}(B)\ |\ B \in \Phi\}$, where $\hat{R}_{jq}(B) = \frac{1}{n}\sum_{i=1}^n(-y\langle i\rangle\cdot\log (\beta_{x_j\langle i\rangle, x_q\langle i\rangle}) + \beta_{x_j\langle i\rangle, x_q\langle i\rangle})$ and also $\hat{\overline{R}}_{jq}(\overline{B}) = \frac{1}{n}\sum_{i=1}^n(-y\langle i\rangle\cdot\textstyle(\log \overline{\beta}_{x_j\langle i\rangle} + \log\overline{\beta}_{x_q\langle i\rangle}) + \overline{\beta}_{x_j\langle i\rangle}\overline{\beta}_{x_q\langle i\rangle})$.
\item If $\hat{\mathcal{G}}_{jq} > t_n$, then add $j$ to the $k$-th facet ($\hat{F}_k = \hat{F}_k \cup j$) and break this inner loop.
\end{enumerate}
\item If $j$ was not added to any facet, then add $j$ as its own facet ($\text{facets}(\hat{\Gamma}) = \text{facets}(\hat{\Gamma}) \cup \{j\}$).
\end{enumerate}
\item Compute $\hat{\Theta}$ by solving (\ref{eqn:convex}) with the partition $\hat{\Gamma}$ and then inverting the mapping $u_{\mathcal{X}_k} = \log \theta_{\mathcal{X}_k}$.
\end{enumerate}

Our first result on the consistency of this estimation procedure applies to cases in which we know the value of $\alpha$ and set the threshold to $t_n = \alpha/2$.  Technically, this result applies to any threshold $t_n = \alpha/\eta$ for any $\eta \in (0,1)$.

\begin{theorem}
\label{thm:const_est}
If $\mathbf{A1}$--$\mathbf{A5}$ are satisfied and $t_n = \alpha/2$, then with probability at least $1 - c_1n^{-1} - 2p^2\cdot\exp(-c_2n(\alpha/4-c_3r/\sqrt{n})^2)$ we have $0 \leq R(\hat{\Theta}) - R(\psi_x) \leq \sqrt{m \rho\log n/n}$, where constants $c_1,c_2,c_3 > 0$ depend on $\mu,M$.
\end{theorem}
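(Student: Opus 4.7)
The plan is to decompose the proof into two pieces: (i) show that with high probability the algorithm recovers the ideal partition exactly, i.e.\ $\hat{\Gamma}=\Gamma^*$, and (ii) conditional on this event, invoke Theorem~\ref{thm:est_consistent} applied to the partition $\Gamma^*$ and use the Bregman-divergence characterization (\ref{eqn:bregman}) that $R(\psi_x)=R(\Theta^*(\Gamma^*))$ to convert the risk bound into the stated form.

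For step (i), the main tool is a uniform concentration bound for the empirical risk gap: for any fixed pair $(j,q)$, I would show
\[
\mathbb{P}\!\left(|\hat{\mathcal{G}}_{jq}-\mathcal{G}_{jq}|>s\right)\;\leq\;2\exp\!\bigl(-c_2 n(s-c_3 r/\sqrt{n})^2\bigr),
\]
with constants depending on $\mu,M$. This is essentially Proposition~\ref{prop:concentration} applied twice, once to each of the two optimization problems defining $\mathcal{G}_{jq}$, where the effective dimension is at most $r^2+2r$ (because only two indices are involved) so the Rademacher bound scales as $r/\sqrt{n}$. The bound on $|\hat{\mathcal{G}}_{jq}-\mathcal{G}_{jq}|$ then follows from the triangle inequality together with $|\min_B \hat R_{jq}-\min_B R_{jq}|\le \sup_B|\hat R_{jq}-R_{jq}|$. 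Setting $s=\alpha/4$ and combining with $t_n=\alpha/2$ gives the clean separation: when $j,q$ lie in different facets of $\Gamma^*$, Proposition~\ref{proposition:rank_one_risk} gives $\mathcal{G}_{jq}=0$, hence $\hat{\mathcal{G}}_{jq}\le \alpha/4<t_n$; when they lie in the same facet, \textbf{A5} gives $\mathcal{G}_{jq}\ge\alpha$, hence $\hat{\mathcal{G}}_{jq}\ge 3\alpha/4>t_n$. A union bound over the at most $\binom{p}{2}\le p^2$ pairs tested by the algorithm yields the factor $2p^2\cdot\exp(-c_2 n(\alpha/4-c_3 r/\sqrt n)^2)$ in the failure probability.

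Given this separation, I would argue by induction on $j=2,\ldots,p$ that after processing variable $j$, the current $\hat\Gamma$ is consistent with $\Gamma^*$, in the sense that each $\hat F_k$ is a subset of a distinct facet of $\Gamma^*$. The inductive step is direct: the algorithm compares $j$ against one representative $q=(\hat F_k)_1$ of each current facet; by the separation above, $j$ is added to $\hat F_k$ iff its true facet in $\Gamma^*$ coincides with that of $q$, and a new singleton facet is opened otherwise. After processing $j=p$ this gives $\hat\Gamma=\Gamma^*$. On this event, the final optimization step in the algorithm is exactly the best-approximate-decomposition problem of Theorem~\ref{thm:est_consistent} for the correct partition $\Gamma^*$, so with probability at least $1-c_1 n^{-1}$ we obtain $R(\hat\Theta)-R(\Theta^*(\Gamma^*))\le\sqrt{m\rho\log n/n}$, and (\ref{eqn:bregman}) rewrites $R(\Theta^*(\Gamma^*))=R(\psi_x)$. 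The stated failure probability is then the union of the two failure events. The main obstacle is the first step: establishing the uniform Rademacher-type bound on $|\hat{\mathcal{G}}_{jq}-\mathcal{G}_{jq}|$ in the specific form that yields the clean exponent $(\alpha/4-c_3 r/\sqrt n)^2$; once this is in hand, the remaining combinatorial recovery and the invocation of Theorem~\ref{thm:est_consistent} are routine.
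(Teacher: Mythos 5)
Your proposal follows essentially the same route as the paper's proof: concentration of the empirical risk gaps via Proposition~\ref{prop:concentration} with a union bound over the at most $p(p-1)/2$ tested pairs, exact recovery of $\Gamma^*$ from the separation given by Proposition~\ref{proposition:rank_one_risk} and \textbf{A5}, then conditioning on that event to invoke Theorem~\ref{thm:est_consistent} together with the Bregman identity $R(\Theta^*(\Gamma^*))=R(\psi_x)$. Your explicit induction showing the greedy loop reconstructs the partition, and the remark that the two-index subproblems have effective dimension $O(r^2)$ (whence the $r/\sqrt{n}$ term), are details the paper leaves implicit; the differences in where the factor of $\alpha/4$ versus $\alpha/2$ enters are absorbed into the constants $c_2,c_3$.
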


\begin{proof}
Two types of mistakes can occur when estimating the ideal partition $\Gamma^*$ using the values $\hat{\mathcal{G}}_{jq}$: Either (i) $\mathcal{G}_{jq} = 0$ but $\hat{\mathcal{G}}_{jq} > \alpha/2$, or (ii) $\mathcal{G}_{jq} \geq \alpha$ but $\hat{\mathcal{G}}_{jq} \leq \alpha/2$.  Restated, a type (i) error does not occur if $|\hat{\mathcal{G}}_{jq} - \mathcal{G}_{jq}| < \alpha/2$, and a type (ii) error does not occur if $|\hat{\mathcal{G}}_{jq} - \mathcal{G}_{jq}| < \alpha/2$.  And because the estimation procedure is constructed such that the maximum number of $\hat{\mathcal{G}}_{jq}$ estimates that will be computed is $p(p-1)/2$, Proposition \ref{prop:concentration} implies $
\mathbb{P}(\max_{(j,q)\in\mathcal{J}}|\hat{\mathcal{G}}_{jq} - \mathcal{G}_{jq}| < \alpha/2) \geq \textstyle 1 - 2p^2\cdot\exp(-c_2n(\alpha/4-c_3r/\sqrt{n})^2)$, where $\mathcal{J}$ is the set of indices $(j,q)$ for which $\hat{\mathcal{G}}_{jq}$ is computed, and $c_2,c_3 > 0$ are constants that depend on $\mu,M$.  This expression is the probability that the estimated partition $\hat{\Gamma}$ is equal to the ideal partition $\Gamma^*$.

Let $\mathcal{A}$ be the event that $\hat{\Gamma} = \Gamma^*$, and let $\mathcal{B}$ be the event that $0 \leq R(\hat{\Theta}) - R(\Theta^*) \leq \sqrt{m \rho\log n/n}$.  Then $\mathbb{P}(\mathcal{B}) \geq \mathbb{P}[\mathcal{B}|\mathcal{A}]\cdot\mathbb{P}(\mathcal{A}) \geq (1 - 2p^2\cdot\exp(-c_2n(\alpha/4-c_3r/\sqrt{n})^2))\cdot(1-c_1n^{-1})$, which has the lower bound $1 - c_1n^{-1} - 2p^2\cdot\exp(-c_2n(\alpha/4-c_3r/\sqrt{n})^2)$.  The proof concludes by recalling that $R(\Theta^*) = R(\psi_x)$ by (\ref{eqn:bregman}).$\qquad$
\end{proof}

The value of $\alpha$ is not always known \emph{a priori}, and so we consider an alternative threshold that does not use the value of $\alpha$.  The downside of this alternative is that the results must necessarily be asymptotic because when $t_n > \alpha$, we cannot lower bound the probability of choosing the correct partition using the bounds from Proposition \ref{prop:concentration}, since these bounds only ensure that the estimation error lies within an interval.

\begin{theorem}
\label{thm:const_est2}
If $\mathbf{A1}$--$\mathbf{A5}$ are satisfied, $t_n = c_4/\sqrt{\log n}$ where $c_4 > 0$ is a constant, $r = O(1)$, and $\log p = o(n/\log n)$; then $R(\hat{\Theta}) - R(\psi_x) = O_p(\sqrt{m \rho\log n/n})$.
\end{theorem}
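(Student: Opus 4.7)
The plan is to reduce this asymptotic statement to the non-asymptotic result in Theorem \ref{thm:const_est} by showing that the data-dependent threshold $t_n = c_4/\sqrt{\log n}$ still recovers the ideal partition with probability tending to one. Concretely, I would define the events $\mathcal{A} = \{\hat{\Gamma} = \Gamma^*\}$ and $\mathcal{B} = \{0 \le R(\hat{\Theta}) - R(\Theta^*(\Gamma^*)) \le K\sqrt{m\rho\log n/n}\}$ for a suitable constant $K$, and prove that $\mathbb{P}(\mathcal{A}) \to 1$. Since $R(\psi_x) = R(\Theta^*(\Gamma^*))$ by \eqref{eqn:bregman}, once $\mathcal{A}$ holds Theorem \ref{thm:est_consistent} applied with the correct $\Gamma = \Gamma^*$ delivers $\mathcal{B}$ with probability at least $1 - c_1 n^{-1}$, yielding the claimed $O_p$ rate by combining the two.

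The core step is the partition-recovery event $\mathcal{A}$. For each candidate pair $(j,q)$ examined by the algorithm, the two sub-problems defining $\hat{R}_{jq}$ and $\hat{\overline{R}}_{jq}$ have order $p' \le 2$ and effective dimension bounded by $r + r^2$. Because $r = O(1)$, Proposition \ref{prop:concentration} applied to each sub-problem yields a concentration bound
\begin{equation}
\mathbb{P}\bigl(|\hat{\mathcal{G}}_{jq} - \mathcal{G}_{jq}| > s\bigr) \le 4\exp\!\bigl(-C_1 n (s/2 - C_2/\sqrt{n})^2\bigr)
\end{equation}
with constants $C_1, C_2$ depending only on $\mu, M$. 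As in the proof of Theorem \ref{thm:const_est}, avoiding a Type (i) error (some $\mathcal{G}_{jq} = 0$ with $\hat{\mathcal{G}}_{jq} > t_n$) follows from $|\hat{\mathcal{G}}_{jq} - \mathcal{G}_{jq}| < t_n$; avoiding a Type (ii) error (some $\mathcal{G}_{jq} \ge \alpha$ with $\hat{\mathcal{G}}_{jq} \le t_n$) follows from $|\hat{\mathcal{G}}_{jq} - \mathcal{G}_{jq}| < \alpha - t_n$, which is implied by $|\hat{\mathcal{G}}_{jq} - \mathcal{G}_{jq}| < t_n$ once $n$ is large enough that $t_n \le \alpha/2$. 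Since $t_n \to 0$, this eventually holds.

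Next I would union-bound over the at most $p(p-1)/2$ pairs $(j,q)$ considered by the algorithm. With $s = t_n = c_4/\sqrt{\log n}$, for sufficiently large $n$ the term $C_2/\sqrt{n}$ is dominated by $t_n/4$, so
\begin{equation}
\mathbb{P}(\mathcal{A}^c) \le 2p^2 \exp\!\bigl(-C_3 n t_n^2\bigr) = \exp\!\bigl(2\log p - C_3 c_4^2\, n/\log n + \log 2\bigr),
\end{equation}
and the growth condition $\log p = o(n/\log n)$ forces the exponent to $-\infty$, giving $\mathbb{P}(\mathcal{A}) \to 1$. The main obstacle is precisely calibrating the threshold so that both error types are controlled simultaneously by a single concentration inequality: $t_n$ must shrink slowly enough to dominate the $O_p(1/\sqrt{n})$ fluctuations of $\hat{\mathcal{G}}_{jq}$ for decoupled pairs, yet fast enough that eventually $t_n < \alpha$ for coupled pairs, while the $\exp(-C_3 n t_n^2)$ tail can absorb the $p^2$ union-bound factor. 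The choice $t_n = c_4/\sqrt{\log n}$ together with $\log p = o(n/\log n)$ is exactly what balances these competing requirements.

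Finally, I would conclude the theorem by the usual decomposition $\mathbb{P}(\mathcal{B}^c) \le \mathbb{P}(\mathcal{A}^c) + \mathbb{P}(\mathcal{B}^c \cap \mathcal{A})$ and noting that on $\mathcal{A}$ the post-selection estimator $\hat{\Theta}$ is precisely the solution of \eqref{eqn:nonconvex} with the correct $\Gamma^*$, so Theorem \ref{thm:est_consistent} applies with $\epsilon$ taken negligible in $O_p$ notation (as discussed in the remark preceding \S \ref{section:tca} regarding $\epsilon$-solutions). This gives $R(\hat{\Theta}) - R(\psi_x) = O_p(\sqrt{m\rho\log n/n})$ as claimed.
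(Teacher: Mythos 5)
Your proposal is correct and follows essentially the same route as the paper's proof: reduce to the argument of Theorem \ref{thm:const_est}, note that $t_n < \alpha/2$ eventually so a single deviation bound $|\hat{\mathcal{G}}_{jq} - \mathcal{G}_{jq}| < t_n$ rules out both error types, union-bound over the $p(p-1)/2$ pairs via Proposition \ref{prop:concentration}, and use $\log p = o(n/\log n)$ to drive $\mathbb{P}(\hat{\Gamma}\neq\Gamma^*)\to 0$ before invoking Theorem \ref{thm:est_consistent} on the recovered partition. Your version is, if anything, slightly more explicit than the paper's about why the exponent diverges and why the type (ii) condition reduces to the type (i) one.
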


\begin{proof}
The proof is roughly the same as the proof of Theorem \ref{thm:const_est}, and so we highlight the main differences.  Since $t_n$ is strictly decreasing, there is some $N$ such that $t_n < \alpha/2$ for all $n \geq N$.  For the remaining arguments in the proof, we will assume $n \geq N$.  Next, note that the mistakes we can make are: Either (i) $\mathcal{G}_{jq} = 0$ but $\hat{\mathcal{G}}_{jq} > t_n$, or (ii) $\mathcal{G}_{jq} \geq \alpha$ but $\hat{\mathcal{G}}_{jq} \leq t_n$.  Restated, a type (i) error does not occur if $|\hat{\mathcal{G}}_{jq} - \mathcal{G}_{jq}| < t_n$, and a type (ii) error does not occur if $|\hat{\mathcal{G}}_{jq} - \mathcal{G}_{jq}| < \alpha/2$.   As a result, Proposition \ref{prop:concentration} implies $\mathbb{P}(\max_{(j,q)\in\mathcal{J}}|\hat{\mathcal{G}}_{jq} - \mathcal{G}_{jq}| < t_n) \geq \textstyle 1 - 2p^2\cdot\exp(-c_2n(\frac{1}{2}c_4/\sqrt{\log n}-c_3r/\sqrt{n})^2)$. And so $\mathbb{P}(\mathcal{B}) \geq \mathbb{P}[\mathcal{B}|\mathcal{A}]\cdot\mathbb{P}(\mathcal{A}) \geq \textstyle(1 - 2p^2\cdot\exp(-c_2n(\frac{1}{2}c_4/\sqrt{\log n}-c_3r/\sqrt{n})^2))(1-c_1n^{-1})$, which leads to the desired result.$\qquad$
\end{proof}

\subsection{Comparison of Statistical Convergence Rates}

These results imply we need $O((m\rho)^{1+\zeta})$, for any $\zeta > 0$, measurements to ensure $|R(\hat{\Theta}) - R(\psi_x)| = O_p(1)$.  Reshaping the tensor (as in \cite{mu2014}) may potentially improve this statistical rate, but this requires further study that is beyond the scope of the present paper.  Recall that existing convex optimization-based methods need $O(\pi^{\lfloor p/2\rfloor}r^{\lceil p/2\rceil})$ measurements whereas an NP-hard formulation needs $O(\pi^p + r\pi p)$ points \cite{mu2014}; note $\rho \geq \text{rank}_\oplus(\cdot) \geq \pi = \max_j \text{rank}_\boxplus(\cdot)$ \cite{desilva2008}, but these bounds are not tight \cite{landsberg2012}.  It is difficult to compare approaches because we use low effective dimension as a surrogate for tensor rank, while many existing methods use low multilinear rank \cite{tomioka2010,signoretto2010,gandy2011,liu2013,mu2014,zhang2014}.    

However, we can make a direct comparison in the special case of rank-1 tensors (where $\pi =  \text{rank}_\boxplus(\cdot) = 1$).  In this case, existing convex optimization-based approaches need $O(r^{\lceil p/2\rceil})$ measurements while our approach only needs $O((rp^2)^{1+\zeta})$, for any $\zeta > 0$, measurements (since $m \leq p$).  Restated, our approach requires a polynomial in $p$ number of measurements, while existing approaches need an exponential in $p$ number of measurements.  Also, our approach is essentially a quadratic factor away from the NP-hard formulation, which needs $O(rp)$ measurements in this case.  Methods based on the tensor nuclear norm need $O(r^{p/2})$ measurements \cite{yuan2015,yuan2016}, while methods based on the higher-order power method \cite{uschmajew2015} can achieve the information theoretic limit $O(rp)$ whenever the algorithm converges to a global optimum.

\subsection{Approximate Low-Rank Structure}
\label{section:aplrs}

So far we have assumed the ideal partition $\Gamma^*$ for $\psi$ has low effective dimension $\rho(\Gamma^*)$ and consequently describes a tensor with low rank; however, it is common to study estimation procedures for instances with approximate low-rank or sparsity structures (e.g., \cite{bickel2008,chatterjee2014}).  Unfortunately, it is unclear how to define approximate low-rank structure for the class of tensors we consider.  The difficulty is that our procedure works by exactly estimating the ideal partition $\Gamma^*$, but if a tensor approximately has low effective dimension then we would need to estimate an approximate partition $\overline{\Gamma}$.  However, partitions are discrete and so there is no clear notion of approximation.

Given these ambiguities with defining approximate low-rank structure, we consider a related notion: We will estimate tensors with low bias that are also low rank and have low effective dimension.  There is a tradeoff inherent in this between the amount of bias and the rank of the tensor.  Smaller bias will lead to higher rank tensors, while larger bias will lead to lower rank tensors.  It is difficult to analytically answer the question of how to control this tradeoff, and so instead we describe a cross-validation approach that can be used to control this.

The challenge with cross-validation is that we will need to control our effective dimension $\rho$; otherwise the cross-validation error will not be an accurate estimate of the actual loss.  We will create a finite sequence of nested partitions $\Gamma^1 \sqsubset \Gamma^2 \sqsubset \cdots \sqsubset \Gamma^q$, where $\Gamma^j \sqsubset \Gamma^{j+1}$ denotes that $F\in\text{faces}(\Gamma^{j+1})$ whenever $F \in\text{facets}(\Gamma^j)$.  The nested partitions will be constructed using a set of thresholds $T = \{t_1, t_2, \ldots, t_q\}$, and we will use cross-validation to pick the threshold.  Note that in general some subset of partitions may be equivalent (i.e., there may be $j$ such that $\Gamma^j = \Gamma^{j+1}$).

For simplicity, we will consider leave-$k$-out cross-validation with $k =  n/2$.  The corresponding tensor completion algorithm using cross-validation is:
\begin{enumerate}
\item For each $t_j \in T$, do the following:
\begin{enumerate}
\item Apply the algorithm from \S \ref{section:tca} to the full data set $(x\langle i\rangle, y\langle i\rangle)$ for $i = 1,\ldots,n$, to estimate the risk gaps $\hat{\mathcal{G}}_{jq}(t_j)$ and partitions $\Gamma^j = \hat{\Gamma}(t_j)$.
\item Use the data $(x\langle i\rangle, y\langle i\rangle)\text{ for } i = (\lfloor n/2\rfloor + 1),\ldots,n$ to compute estimates $\tilde{\Theta}(t)$ by solving (\ref{eqn:convex}) with the partition $\hat{\Gamma}(t)$ and then inverting the mapping $u_{\mathcal{X}_k} = \log \theta_{\mathcal{X}_k}$.
\item Compute the empirical cross-validation error $\hat{V}(t)$, which is defined as $\hat{V}(t) = \frac{1}{\lfloor n/2\rfloor}\sum_{i=1}^{\lfloor n/2\rfloor} (-y\langle i\rangle\cdot\textstyle\sum_{k = 1}^m\log \tilde{\theta}(t)_{\mathcal{X}_k\langle i\rangle} + \textstyle\prod_{k = 1}^m\tilde{\theta}(t)_{\mathcal{X}_k\langle i\rangle})$.
\end{enumerate}
\item Set $\hat{t} = \arg \min \{\hat{V}(t)\ |\ t \in T\}$ to be the threshold selected by cross-validation. 
\item Use the full data set $(x\langle i\rangle, y\langle i\rangle)$ for $i = 1,\ldots,n$, to compute the final estimate $\hat{\Theta}(\hat{t})$ by solving (\ref{eqn:convex}) with the partition $\hat{\Gamma}(\hat{t})$ and then inverting the mapping $u_{\mathcal{X}_k} = \log \theta_{\mathcal{X}_k}$.
\end{enumerate}

Suppose that $t^* = \arg \min \{R(\hat{\Theta}(t))\ |\ t \in T\}$ is the optimal threshold.  The following theorem shows that we can achieve an oracle inequality \cite{arlot2010} using leave-$k$-out cross-validation.  Note that we do not assume \textbf{A4},\textbf{A5} hold.

\begin{theorem}
\label{thm:cv}
If $\mathbf{A1}$--$\mathbf{A3}$ are satisfied, then with probability at least $1 - 10c_1n^{-1}\cdot(\#T)$ we have $R(\hat{\Theta}(\hat{t})) - R(\psi_x) \leq R(\hat{\Theta}(t^*)) - R(\psi_x) + (4\sqrt{2}+2)\sqrt{m_q\rho_q\log n/n}$, where $m_q = m(\Gamma^q)$, $\rho_q = \rho(\Gamma^q)$, and constant $c_1 > 0$ depends on $\mu,M$.
\end{theorem}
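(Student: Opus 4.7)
The plan is to invoke the uniform concentration result of Proposition \ref{prop:concentration} for three versions of the empirical risk---the full-sample $\hat{R}$, the first-half (validation) sample $\hat{R}_1$, and the second-half (training) sample $\hat{R}_2$---and to apply the resulting deviation bounds to the estimators $\hat{\Theta}(t)$ and $\tilde{\Theta}(t)$ for every $t \in T$. Because the partitions satisfy $\Gamma^1 \sqsubset \cdots \sqsubset \Gamma^q$, the constraint set $\Omega(\Gamma^j)$ is contained in $\Omega(\Gamma^q)$ for every $j$, so a bound of order $\epsilon_n^{(0)} := O(\sqrt{m_q\rho_q\log n / n})$ on the full sample and $\epsilon_n^{(1)}, \epsilon_n^{(2)} = \sqrt{2}\,\epsilon_n^{(0)}$ on each half sample of size $n/2$ is enough. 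A union bound over $t \in T$ absorbs the $\#T$ factor appearing in the stated probability.

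For each fixed $t \in T$ let $\Theta^*(t)$ denote the oracle minimizer of $R$ over $\Omega(\Gamma^j(t))$. A standard basic-inequality argument, combining the empirical optimality of $\tilde{\Theta}(t)$ and $\hat{\Theta}(t)$ with the uniform deviation bounds from the preceding step, yields
\begin{align*}
R(\tilde{\Theta}(t)) - R(\Theta^*(t)) &\leq 2\epsilon_n^{(2)} = 2\sqrt{2}\,\epsilon_n^{(0)},\\
R(\hat{\Theta}(t)) - R(\Theta^*(t)) &\leq 2\epsilon_n^{(0)}.
\end{align*}
Moreover, because $\tilde{\Theta}(t) \in \Omega(\Gamma^q)$ and $\hat{V}(t) = \hat{R}_1(\tilde{\Theta}(t))$, the uniform concentration of $\hat{R}_1$ gives $|\hat{V}(t) - R(\tilde{\Theta}(t))| \leq \epsilon_n^{(1)} = \sqrt{2}\,\epsilon_n^{(0)}$ simultaneously over all $t \in T$.

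The cross-validation optimality $\hat{V}(\hat{t}) \leq \hat{V}(t^*)$ combined with the last display produces $R(\tilde{\Theta}(\hat{t})) \leq R(\tilde{\Theta}(t^*)) + 2\epsilon_n^{(1)}$. Chaining this with the two basic inequalities and the relation $R(\Theta^*(t)) \leq \min\bigl(R(\tilde{\Theta}(t)), R(\hat{\Theta}(t))\bigr)$, I obtain
\begin{align*}
R(\hat{\Theta}(\hat{t})) &\leq R(\Theta^*(\hat{t})) + 2\epsilon_n^{(0)} \leq R(\tilde{\Theta}(\hat{t})) + 2\epsilon_n^{(0)}\\
&\leq R(\tilde{\Theta}(t^*)) + 2\epsilon_n^{(1)} + 2\epsilon_n^{(0)}\\
&\leq R(\Theta^*(t^*)) + 2\epsilon_n^{(2)} + 2\epsilon_n^{(1)} + 2\epsilon_n^{(0)}\\
&\leq R(\hat{\Theta}(t^*)) + (2 + 4\sqrt{2})\,\epsilon_n^{(0)},
\end{align*}
and subtracting $R(\psi_x)$ from both sides yields the desired oracle inequality, with the constant $(4\sqrt{2}+2)$ arising from three applications of the basic inequality (each contributing a factor of $2$) and the two reductions to a half sample (each contributing an extra $\sqrt{2}$).

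The main obstacle is orienting the chain correctly: one must pivot between the full-sample estimator $\hat{\Theta}$ and the half-sample estimator $\tilde{\Theta}$ through the common oracle $\Theta^*(t)$, which serves as a lower bound on $R$ within each constraint set, first at $\hat{t}$ (to pass from $\hat{\Theta}(\hat{t})$ to $\tilde{\Theta}(\hat{t})$) and then at $t^*$ (to pass from $\tilde{\Theta}(t^*)$ back to $\hat{\Theta}(t^*)$). A secondary subtlety is that the uniform concentration must be taken over the coarsest set $\Omega(\Gamma^q)$, because $\tilde{\Theta}(t)$ is only guaranteed to lie in $\Omega(\Gamma^j(t)) \subseteq \Omega(\Gamma^q)$ and a single deviation bound has to control every candidate estimator simultaneously; this is precisely what forces the $(m_q,\rho_q)$ dependence in the final bound rather than sharper per-$t$ quantities.
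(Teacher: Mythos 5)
Your proposal is correct and follows essentially the same route as the paper: both arguments pivot through the oracle $\Theta^*(t)$ at $\hat{t}$ and at $t^*$, use the cross-validation optimality $\hat{V}(\hat{t}) \leq \hat{V}(t^*)$ together with uniform concentration (Proposition \ref{prop:concentration} / Theorem \ref{thm:est_consistent}) on the full sample and on each half sample, take a union bound over $T$, and arrive at the constant $(4\sqrt{2}+2)$ with the $\sqrt{2}$ factors coming from the half-sample sizes. The only differences are cosmetic bookkeeping (you chain one-sided basic inequalities where the paper decomposes absolute differences via the triangle inequality) and your explicit justification for the $(m_q,\rho_q)$ dependence via the nesting $\Omega(\Gamma^j)\subseteq\Omega(\Gamma^q)$, which the paper leaves implicit.
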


\begin{proof}
Observe that we must have
\begin{equation}
\label{eqn:ti0}
R(\hat{\Theta}(\hat{t})) - R(\psi_x) \leq R(\hat{\Theta}(t^*)) - R(\psi_x) + |R(\hat{\Theta}(\hat{t})) - R(\hat{\Theta}(t^*))|,
\end{equation}
and so applying the triangle inequality to the second term gives
\begin{equation}
\label{eqn:ti1}
|R(\hat{\Theta}(\hat{t})) - R(\hat{\Theta}(t^*))| \leq |R(\hat{\Theta}(\hat{t})) - \hat{V}(\hat{t})| + |\hat{V}(\hat{t}) - R(\hat{\Theta}(t^*))|.
\end{equation}
We will deal with the two terms on the right separately.  

Applying the triangle inequality to the first term of (\ref{eqn:ti1}) gives $|R(\hat{\Theta}(\hat{t})) - \hat{V}(\hat{t})| \leq |R(\hat{\Theta}(\hat{t})) - R(\Theta^*(\hat{t}))| + |R(\Theta^*(\hat{t})) - R(\tilde{\Theta}(\hat{t}))| + |R(\tilde{\Theta}(\hat{t})) - \hat{V}(\hat{t})|$.  The first two terms are bounded by Theorem \ref{thm:est_consistent}, and the third term is bounded by Proposition \ref{prop:concentration}.  So if we let $w_n = \sqrt{m_q\rho_q\log n/n}$, then using the union bound twice (once for having three terms and once for having multiple $t \in T$) gives 
\begin{equation}
\label{eqn:tin1}
|R(\hat{\Theta}(\hat{t})) - \hat{V}(\hat{t})| < (2\sqrt{2}+1)\cdot w_n,
\end{equation}
with probability at least $1 - 5c_1n^{-1}\cdot(\#T)$.

For the second term of (\ref{eqn:ti1}), the triangle inequality gives $|\hat{V}(\hat{t}) - R(\hat{\Theta}(t^*))| \leq |\hat{V}(\hat{t}) - R(\tilde{\Theta}(t^*))| + |R(\tilde{\Theta}(t^*)) - R(\Theta^*(t^*))| + |R(\Theta^*(t^*)) - R(\hat{\Theta}(t^*))|$.  The last two terms are bounded by Theorem \ref{thm:est_consistent}, and so we focus on the first term.  Because $\hat{t}$ minimizes $\hat{V}(t)$, we have
\begin{equation}
\label{eqn:ti4}
\hat{V}(\hat{t}) \leq \hat{V}(t^*) \Rightarrow \hat{V}(\hat{t}) - R(\tilde{\Theta}(t^*)) \leq \hat{V}(t^*) - R(\tilde{\Theta}(t^*)).
\end{equation}
Similarly, because $t^*$ is the minimizer of $R(\tilde{\Theta}(t))$, we have
\begin{equation}
\label{eqn:ti5}
R(\tilde{\Theta}(t^*)) \leq R(\tilde{\Theta}(\hat{t})) \Rightarrow \hat{V}(\hat{t}) - R(\tilde{\Theta}(\hat{t})) \leq \hat{V}(\hat{t}) - R(\tilde{\Theta}(t^*)).
\end{equation}
Combining (\ref{eqn:ti4}) and (\ref{eqn:ti5}) leads to $|\hat{V}(\hat{t}) - R(\tilde{\Theta}(\hat{t}))| \leq \max_{t \in T} |\hat{V}(t) - R(\tilde{\Theta}(t))|$, which can be bounded by Proposition \ref{prop:concentration}.  As a result, the union bound gives the following
\begin{equation}
\label{eqn:tin2}
|\hat{V}(\hat{t}) - R(\hat{\Theta}(t^*))| < (2\sqrt{2}+1)\cdot w_n,
\end{equation}
with probability at least $1 - 5c_1n^{-1}\cdot(\#T)$.  The result follows by using the union bound to combine (\ref{eqn:ti0}), (\ref{eqn:ti1}), (\ref{eqn:tin1}), and (\ref{eqn:tin2}).$\qquad$
\end{proof}

The lower bound on the success probability depends on the number of tuning parameters (via the cardinality of $T$), which is consistent with empirical results where using many tuning parameters leads to overfitting \cite{ng1997}.  Another note is we must control the decomposition complexity (by ensuring that $\rho_q$ is sufficiently small relative to $n$) to guarantee the above oracle inequality is achieved.  Lastly, this result implies that the cross-validation procedure is \emph{efficient} (in the sense of \cite{arlot2010}) when $\#T$ and $\rho_q$ grow sufficiently slowly in relation to $n$.

\section{Sparsity in Hierarchical Decompositions}

\label{section:shd}

Sparsity in the tensor $\psi_x$ can be used to improve the performance of our methods.  Here, sparsity means parameters $\theta_{\mathcal{X}_k}$ that are equal to 1, because this corresponds to a parameter not influencing the tensor value $\psi_x$.  In particular, we define a best sparse hierarchical decomposition as
\begin{equation}
\label{eqn:spnonconvex}
\hat{\Theta} = \arg\min \big\{\hat{R}(\Theta)\ \big|\ \Theta \in \Omega,\ \textstyle\sum_{k=1}^m\sum_{\mathcal{X}_k\in\mathcal{R}_k} |\log\theta_{\mathcal{X}_k}| \leq \lambda\big\}.
\end{equation}
The convex reparametrization is
\begin{equation}
\hat{U} = \arg\min \big\{\hat{R}(U)\ \big|\ U \in \Phi,\ \textstyle\sum_{k=1}^m\sum_{\mathcal{X}_k\in\mathcal{R}_k} |u_{\mathcal{X}_k}| \leq \lambda\big\}.
\end{equation}
Our convex reformulation matches the normal notion of coefficient sparsity because sparsity means the $u_{\mathcal{X}_k} = \log \theta_{\mathcal{X}_k}$ are equal to 0.  The $\textstyle\sum_{k=1}^m\sum_{\mathcal{X}_k\in\mathcal{R}_k} |u_{\mathcal{X}_k}| \leq \lambda$ constraint in the convex reformulation is just an $\ell_1$-norm inequality, and so it can be represented using a linear in $\rho$ number of linear inequalities using an LP lift \cite{yannakakis1991}.  Moreover, we can still solve this convex formulation in polynomial time; the proof is nearly identical to that of Proposition \ref{proposition:convex}, and so it is not included here.  

The key result regarding the above sparsity-exploiting formulations is an extension of Proposition \ref{prop:concentration}, from which we can then prove results analogous to those above for decomposition (Theorem \ref{thm:ranres}), approximation (Theorem \ref{thm:ranres2}), and completion (Theorems \ref{thm:const_est}, \ref{thm:const_est2}, and \ref{thm:cv}) of positive tensors.  We will not belabor this point by explicitly including these corresponding results or their proofs.  We instead prove only this key result on concentration of the empirical loss when $\Theta$ satisfies the following additional constraint: $\textstyle\sum_{k=1}^m\sum_{\mathcal{X}_k\in\mathcal{R}_k} |\log\theta_{\mathcal{X}_k}| \leq \lambda$.

\begin{proposition}
\label{prop:sparse}
Under $\mathbf{A1}$--$\mathbf{A3}$ and for any $\Gamma$, we have
\begin{equation}
\mathbb{P}\Big(\sup_{\Theta \in \Omega^\prime}\big|\hat{R}(\Theta) - R(\Theta)\big| < t\Big) \geq 1 - \textstyle \exp\Big(-C_3n\Big(t-C_4\lambda\sqrt{\frac{\log \rho}{n}}\Big)^2\Big),
\end{equation}
where $\Omega^\prime = \{\Theta \in \Omega : \|\log\Theta\|_1 \leq \lambda\}$, and constants $C_3,C_4 > 0$ depend on $\mu,M$.
\end{proposition}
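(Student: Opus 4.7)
The plan is to mimic the proof of Proposition \ref{prop:concentration} but swap the Rademacher complexity bound for a linear class with $\ell_\infty$-bounded parameters for one with $\ell_1$-bounded parameters, exploiting the fact that the additional constraint $\sum_{k,\mathcal{X}_k} |u_{\mathcal{X}_k}| \leq \lambda$ is exactly an $\ell_1$ bound in the reparametrized space $u_{\mathcal{X}_k} = \log\theta_{\mathcal{X}_k}$.

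First, I would move to the convex reparametrization $U$ and define $\Delta(U) = \hat{R}(U) - R(U)$. Under \textbf{A1},\textbf{A2} the bounded-difference constant of $\hat{R}(U)$ is still $(\mu M\log M + M)/n$ (these bounds hold uniformly on $\Phi$, and restricting to the $\ell_1$-ball only shrinks the domain), so McDiarmid's inequality yields
\begin{equation}
\mathbb{P}\Big(\sup_{U \in \Phi^\prime}|\Delta(U)| - \mathbb{E}\big(\sup_{U \in \Phi^\prime}|\Delta(U)|\big) > s\Big) \leq \exp\Big(\tfrac{-2ns^2}{(\mu M\log M + M)^2}\Big),
\end{equation}
where $\Phi^\prime = \Phi \cap \{U : \|U\|_1 \leq \lambda\}$.

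Next, since the loss $\phi(z) = -yz + e^z$ is Lipschitz in $z$ on $[-\log M,\log M]$ with constant $L = \mu M + M$, the standard Ledoux--Talagrand contraction argument gives $\mathbb{E}\big(\sup_{U\in\Phi^\prime}|\Delta(U)|\big) \leq 4L\cdot\mathsf{R}(\mathsf{F}_\mathsf{W}^\prime)$, where $\mathsf{F}_\mathsf{W}^\prime$ is the class of linear functions $\chi \mapsto \langle\chi, U\rangle$ with $U \in \Phi^\prime$ applied to the $\{0,1\}^\rho$-valued pseudo-predictors $\chi\langle i\rangle$ introduced in the proof of Proposition \ref{prop:concentration}. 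The crucial difference from the non-sparse case is that now the parameters satisfy $\|U\|_1 \leq \lambda$ while the pseudo-predictors satisfy $\|\chi\|_\infty = 1$. By the $\ell_1$/$\ell_\infty$ Rademacher bound (Massart's finite-class lemma applied to the $2\rho$ extreme points of the $\ell_1$-ball, or equivalently the result of \cite{kakade2009}), we obtain
\begin{equation}
\mathsf{R}(\mathsf{F}_\mathsf{W}^\prime) \leq C\lambda\sqrt{\frac{\log \rho}{n}}
\end{equation}
for some absolute constant $C$.

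Combining these three ingredients and absorbing the constants $\mu,M,L,C$ into $C_3,C_4$ gives the stated bound after setting $t = s + 4LC\lambda\sqrt{\log\rho/n}$ and invoking the equivalence of $\Phi^\prime$ and $\Omega^\prime$ under $u_{\mathcal{X}_k} = \log\theta_{\mathcal{X}_k}$ from Proposition \ref{proposition:convex_equivalence}. The only real obstacle is justifying the switch from the $\sqrt{m\rho/n}$ rate (which came from $\|\chi\|_2 = \sqrt{m}$ paired with an $\ell_\infty$ parameter constraint) to the $\lambda\sqrt{\log\rho/n}$ rate; this is not a deeper change but requires invoking the correct dual pairing of norms in the Rademacher bound, since in the sparse setting the relevant structure is $\|\chi\|_\infty$ rather than $\|\chi\|_2$. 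Everything else — McDiarmid, contraction, and invertibility of the reparametrization under \textbf{A1} — transfers verbatim from Proposition \ref{prop:concentration}.
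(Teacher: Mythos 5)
Your proposal is correct and follows essentially the same route as the paper: both reuse the McDiarmid and Lipschitz-contraction steps from Proposition \ref{prop:concentration} verbatim and replace only the Rademacher complexity bound, switching from the $\ell_2/\ell_2$ pairing to the $\ell_1/\ell_\infty$ pairing (via \cite{kakade2009}, equivalently Massart's lemma on the extreme points of the $\ell_1$-ball) to obtain the $\lambda\sqrt{\log\rho/n}$ rate. The only cosmetic difference is that the paper states the resulting bound as $\lambda\sqrt{2\log 2\rho/n}$ before absorbing constants.
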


\begin{proof}
The proof follows that of Proposition \ref{prop:concentration}, and so we only highlight the differences.  As before, we refer to the $\mathbbm{1}_{\mathcal{X}_k = \mathcal{X}_k\langle i\rangle}$ as pseudo-predictors, and the $u_{\mathcal{X}_k}$ are still the parameters.  If we define $\chi \in \{0,1\}^\rho$ to be the vector of pseudo-predictors, then $\|\chi\|_1 = m$, $\|\chi\|_2 = \sqrt{m}$, and $\|\chi\|_\infty = 1$.  The primary difference in this case is that the parameters belong to the modified set $\Phi^\prime = \{U \in \Phi : \|U\|_1 \leq \lambda\}$.  And so, results from \cite{kakade2009} immediately give that $\mathsf{R}(\mathsf{F}_\mathsf{W}) \leq \lambda\sqrt{2\log 2\rho/n}$. The result follows by combining this with (\ref{eqn:mcdiarmids}) and (\ref{eqn:radup}).$\qquad$
\end{proof}

This result shows that using soft-thresholding (via the $\|\log\Theta\|_1 \leq \lambda$ constraint) achieves performance that leverages the sparsity of the entries.  In particular, the statistical convergence rate (i.e., the upper bounds in Theorems \ref{thm:ranres}, \ref{thm:ranres2}, \ref{thm:const_est}, \ref{thm:const_est2}, and \ref{thm:cv}) implied by the proposition depends on effective dimension $\rho$ (rather than the total number of tensor entries $\prod r_i$) and on $\lambda$ (rather than the $\ell_1$-norm of parameters that are upper-bounded by $M$).  Expanding further, we would have the following convergence rates (i.e., the upper bounds in Theorems \ref{thm:ranres}, \ref{thm:ranres2}, \ref{thm:const_est}, \ref{thm:const_est2}, and \ref{thm:cv}) for positive tensor decomposition, approximation, and completion depending on the structure we leverage:\\

{\centering
\begin{tabular}{lc}
\hline
Structure & Convergence Rate \\
\hline
None & $O_p\Big(\hphantom{\lambda}\sqrt{\frac{r^p\log n}{n}}\quad\ \Big)$\\
Low Rank & $O_p\Big(\hphantom{\lambda}\sqrt{\frac{m\rho\log n}{n}}\ \ \ \Big)$\\
Sparse & $O_p\Big(\lambda\sqrt{\frac{\log r^p\log n}{n}}\Big)$\\
Sparse + Low Rank& $O_p\Big(\lambda\sqrt{\frac{\log \rho\log n}{n}}\ \Big)$\\
\hline
\end{tabular}\par}
\vspace{1\baselineskip}\vspace{-\parskip}
There is an additional statistical implication of sparsity in our framework.  Simultaneously regularizing for multiple sparse structures using convex approaches only regularizes with respect to the single most useful structure \cite{oymak2012,mu2014}.  In tensor completion, sparsity in the entries and the low-rank structure signify two distinct sparse structures.  Our framework overcomes the limitation of using only convex approaches to regularize with respect to these two sparse structures by combining hard- and soft-thresholding (similar to \cite{daspremont2007}).  In particular, we estimate a partition $\hat{\Gamma}$ using hard-thresholding applied to empirical risk gaps $\hat{\mathcal{G}}_{jq}$; and so we can use soft-thresholding to exploit sparsity in the coefficients of the partition decomposition. 

This combination of hard- and soft-thresholding allows our framework to handle other types of sparsity models.  One class of models \cite{goldfarb2014} is a low-rank tensor corrupted by a sparse additive perturbation.  Our approach can estimate a similar model: In particular, consider the model of a low-rank tensor corrupted by a sparse multiplicative perturbation.  Then, we can solve 
\begin{multline}
(\hat{\Theta},\hat{E}) = \arg\min \big\{\textstyle\frac{1}{n}\sum_{i=1}^n \big(-y\langle i\rangle\cdot\big(\log e_{\mathcal{X}\langle i\rangle} + \textstyle\sum_{k = 1}^m\log \theta_{\mathcal{X}_k\langle i\rangle}\big) + \\
\textstyle e_{\mathcal{X}\langle i\rangle}\cdot\prod_{k = 1}^m\theta_{\mathcal{X}_k\langle i\rangle}\big)\ \big|\ \Theta \in \Omega,\  \textstyle\sum_{\mathcal{X}\in\mathcal{R}} |\log e_{\mathcal{X}}| \leq \lambda\big\}.
\end{multline}
The convex reparametrization is
\begin{multline}
(\hat{U},\hat{W}) = \arg\min \big\{\textstyle\frac{1}{n}\sum_{i=1}^n \big(-y\langle i\rangle\cdot\textstyle\big(w_{\mathcal{X}\langle i\rangle} + \sum_{k = 1}^mu_{\mathcal{X}_k\langle i\rangle}\big) + \\
\textstyle\exp\big(w_{\mathcal{X}\langle i\rangle} + \sum_{k = 1}^mu_{\mathcal{X}_k\langle i\rangle}\big)\big|\ U \in \Phi,\  \textstyle\sum_{\mathcal{X}\in\mathcal{R}} |w_{\mathcal{X}}| \leq \lambda\big\},
\end{multline}
where we have the equivalence relation $w_{\mathcal{X}} = \log e_{\mathcal{X}}$.  We do not develop the theory for this (or other similar) models in this paper.

\section{Numerical Example}

\label{section:ne}

We compare our proposed estimators to three recent estimators for tensor completion.  More specifically, we compare five approaches:

\begin{remunerate}
\item The first estimator (\emph{Square Nuclear Norm} method) \cite{mu2014} is given by $\hat{\psi} = \arg\min_{\phi} \{\textstyle\frac{1}{n}\sum_{i=1}^n(y\langle i\rangle - {\phi}_{x\langle i\rangle})^2\ \ |\ \|\text{reshape}({\phi}_{(1)},\prod_{j=1}^{s}r_j, \prod_{j=s+1}^pr_j)\|_*\leq\lambda\}$, where ${\phi}_{(1)}$ is the unfolding of ${\phi}$ (into a matrix) along the first dimension \cite{desilva2008,mu2014}, the value $s$ minimizes $|\prod_{j=1}^{s}r_j - \prod_{j=s+1}^pr_j|$, $\text{reshape}(T,n_1,n_2)$ is a function that reshapes a matrix $T$ to have $n_1$ rows and $n_2$ columns, and $\lambda > 0$ is a constant.  

\item The second estimator (\emph{Maximum Nuclear Norm} method) \cite{zhang2014} is given by $\hat{\psi} = \min_{{\phi}} \{\textstyle\frac{1}{n}\sum_{i=1}^n(y\langle i\rangle - {\phi}_{x\langle i\rangle})^2\ |\ \max_j\{\|{\phi}_{(j)}\|_*\}\leq\lambda\}$, where ${\phi}_{(j)}$ is the unfolding ${\phi}$ along the $j$-th dimension, and $\lambda > 0$ is a constant.  

\item The third estimator (\emph{Alternating Least Squares} method) \cite{kolda2009} identifies a best \textsc{cp} decomposition \cite{kolda2009,hillar2013} by solving $\hat{\psi} = \min_{{\phi}} \{\textstyle\frac{1}{n}\sum_{i=1}^n(y\langle i\rangle - {\phi}_{x\langle i\rangle})^2\ |\ \phi = \sum_{j=1}^q v_1^j \otimes\cdots\otimes v_p^j\}$ using alternating least squares (ALS).  This is an ordinary least squares (OLS) problem in the variables $v_w^j$, for all $j\in[q]$, when the $v_k^j$, for all $k \in [p]\setminus w\wedge j\in[q]$, are fixed.  ALS minimizes this objective by iterating the index $w$ between $1,\ldots,p$ and solving the resulting OLS problems.  Though ALS is the most common approach for computing tensor decompositions \cite{kolda2009}, it typically converges to a local minimum \cite{kolda2009}.

\item The fourth estimator (\emph{Partition Log-Linear} method) is our tensor completion algorithm with cross-validation from \S \ref{section:aplrs} and with the estimator (\ref{eqn:nonconvex}).

\item The fifth estimator (\emph{Sparse Partition Log-Linear} method) is our tensor completion algorithm with cross-validation from \S \ref{section:aplrs} and with the estimator (\ref{eqn:spnonconvex}).

\end{remunerate}

The versions of the nuclear norm estimators we use are different from those in \cite{mu2014,zhang2014} because here we measure tensor entries with noise; the versions presented in \cite{mu2014,zhang2014} deal with the noiseless case.  Because we have noise, we instead minimize the deviation between measurements and estimates subject to a constraint that the nuclear norms are not large.  This is a common formulation for the noisy case of sparse estimation problems (see for instance \cite{osborne2000,greenshtein2004,candes2010}).  Also, we use a variant of the Maximum Nuclear Norm with simpler computation than \cite{zhang2014}, which converts the maximum into a smooth formulation that is amenable to specialized algorithm design.


Numerical implementations of the five estimators have been made available\footnote{\url{http://ieor.berkeley.edu/~aaswani/plrt/}}.  We implemented (i) our (Sparse) Partition Log-Linear method using the MATLAB toolbox for MOSEK \cite{mosek}, (ii) the Square and Maximum Nuclear Norm methods using the the CVX package \cite{grant08} for MATLAB, and (iii) the Alternating Least Squares method using MATLAB.  Our implementation code is not optimized for speed, and we have not studied the choice of algorithms for solving the convex reformulation of our estimators.  However, our estimators compute quickly because there are no constraints on matrix positive semidefiniteness (unlike the methods using nuclear norm).  We observed that our estimators computed faster than the nuclear norm estimators, but we do not include benchmarks because optimized code was not used.

The first numerical example consists of synthetic data generated from the tensor 
\begin{equation}
\psi = \begin{pmatrix}2 & 1 & 1\\1 & 2 & 1\\1 & 1 & 2\end{pmatrix}\otimes\begin{pmatrix}1\\2\\3\end{pmatrix}\otimes\begin{pmatrix}1\\1\\1\end{pmatrix}\otimes\begin{pmatrix}1\\1\\1\end{pmatrix},
\end{equation}
at two different noise levels, and we examine the estimation error as the amount of data increases for a fixed model.  The random variable $(1+z)$ has gamma distribution with shape $k > 0$ and scale $\theta > 0$.  A gamma distribution is used for the noise because it has support over $[0,\infty)$, and can be specified to ensure $\mathbb{E}(z) = 0$ as required by \textbf{A2}.  Though the unbounded support technically violates the assumption in \textbf{A2} on the boundedness of the noise, this boundedness is not a crucial assumption and can be relaxed (as we discussed earlier).  The numerical results support this conclusion.

Entries of the tensor are measured uniformly, and we used leave-$k$-out cross-validation with $k = n/2$ to select the tuning parameters of the different approaches.  Results for 100 repeated simulations are shown in Table \ref{table:synthetic}.  The table reports average prediction error under a square loss $\mathcal{E} = (\prod r_i)^{-1}\sum_{x \in \mathcal{R}} (\psi_x - \hat{\psi}_x)^2$.  The results indicate that our estimation procedure is competitive with existing approaches to tensor completion.  For each scenario, either the Partition Log-Linear or Sparse Partition Log-Linear approach has the lowest estimation error.

\begin{table}[t]
\footnotesize
\centering
\begin{tabular}{|lcccccc|}
\hline
\multicolumn{7}{|c|}{Gamma Distribution $k = 1$, $\theta=1$ (with Variance 1) \Large\phantom{0} }\\
&\multicolumn{6}{c|}{$n$}\\ 
& 10 &50 &100 &500 &1000 &5000\\
\hline
Square Nuclear Norm & \phantom{0}9.28 & \phantom{0}7.96 & 6.44 & 2.10 & 1.12 & 0.24\\
Maximum Nuclear Norm & \phantom{0}9.29 & \phantom{0}8.51 & 7.44 & 2.58 & 1.61 & 0.68\\
Alternating Least Squares & 54.10 & 10.43 & 4.00 & 1.39 & 1.16 & 0.11 \\
Partition Log-Linear & 16.46 & \phantom{0}4.17 & 2.43 & 0.35 & 0.16 & 0.03 \\
Sparse Partition Log-Linear & \phantom{0}4.67 & \phantom{0}3.13 & 2.60 & 0.31 & 0.16 & 0.03\\
\hline
\multicolumn{7}{c}{}\\[-1ex]
\hline
\multicolumn{7}{|c|}{Gamma Distribution $k = \frac{1}{5}$, $\theta=5$ (with Variance 5) \Large\phantom{0} }\\
&\multicolumn{6}{c|}{$n$}\\ 
& 10 &50 &100 &500 &1000 &5000\\
\hline
Square Nuclear Norm & \phantom{00}9.31 & \phantom{0}9.27 & \phantom{0}9.23 & 6.18 & 4.40 & 1.03\\
Maximum Nuclear Norm & \phantom{00}9.31 & \phantom{0}9.29 & \phantom{0}9.25 & 6.71 & 4.25 & 1.46\\
Alternating Least Squares & 278.78 & 78.10 & 49.47 & 3.65 & 1.92 & 1.12\\
Partition Log-Linear & \phantom{0}15.52 & \phantom{0}8.72 & \phantom{0}7.03 & 2.18 & 1.31 & 0.16\\
Sparse Partition Log-Linear & \phantom{00}5.10 & \phantom{0}4.08 & \phantom{0}3.90 & 2.67 & 1.27 & 0.16\\
\hline
\end{tabular}
\caption{Median of Average Estimation Error Over 100 Trials}
\label{table:synthetic}
\end{table}

\section{Regression with Categorical Variables}

\label{section:rcv}

We refer to a model with purely categorical predictors and a numeric response as a \emph{combinatorial regression model}.  In particular, suppose there are $p$ categorical predictors.  For the $j$-th predictor with $r_j$ different categories, we can assign each category to a unique integer in $[r_j]$.  With this notation, a combinatorial regression model can be written as $\psi_x := \mathbb{E}[y|x] = \theta_{x_1,x_2,\ldots,x_p}$, where $x_j \in [r_j]$ is the $j$-th predictor, and $\theta : [r_1] \times \ldots \times [r_p] \rightarrow \mathbb{R}$ is a function from a discrete to continuous space.

Even without knowledge about $\theta$, we can still identify the model because of the finiteness of the predictor space.  Specifically, if we have measurements $(x\langle i\rangle,y\langle i\rangle)$ for $i=1,\ldots,n$, then we can identify the model using $\hat{\psi}_x = \frac{1}{|\mathcal{I}(x)|}\sum_{i \in \mathcal{I}(x)} y\langle i\rangle$, where $\mathcal{I}(x) = \{i : x\langle i\rangle = x\}$.  Under typical assumptions on noise, this is a consistent estimator.  Unfortunately, its convergence rate $O_p(r^p/n)$, where $r = \max r_j$, is exponentially slow in $p$; this is not surprising because there is a combinatorial explosion that leads to a curse of dimensionality if we try to estimate each value of $\psi_x$ separately.
 
The standard approach to reducing dimensionality is to (i) define \emph{coding variables} (e.g., dummy predictors) to convert categorical variables into numerical values, and (ii) perform regression using the coding variables \cite{cohen2013}.  Though this converges at $O_p(rp/n)$, this can be restrictive because the impact of different predictors $x_j,x_k$ for $j\neq k$ is completely decoupled, which is not reflective of a combinatorial model.  (In principal, variables can be coupled by defining pairwise (or higher) coding variables, but this is typically done using domain knowledge.)  


Our notation for a combinatorial regression model is suggestive of another interpretation of low-rank structure: We propose the novel interpretation that a combinatorial regression model can be represented by a low-rank tensor.  The tensor is indexed by the $x_j$, which are integers.  Thus, the problem of estimating a combinatorial regression problem is equivalent to a noisy low-rank tensor completion problem.  Given the discussion of tensor completion in this paper, we can consider an example of identifying a combinatorial regression model with real data using tensor completion.

\subsection{Violacein Pathway}

\begin{figure}
\centering
 \subfloat[Square Nuclear Norm]{\includegraphics[trim=0.20in 0 0.23in 0, clip]{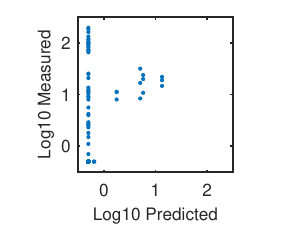}}
 \subfloat[Maximum Nuclear Norm]{\includegraphics[trim=0.20in 0 0.23in 0, clip]{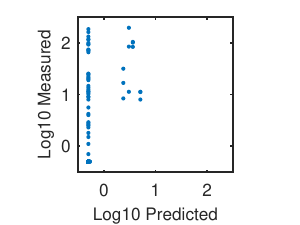}}
 \subfloat[Dummy Coding Linear]{\includegraphics[trim=0.20in 0 0.23in 0, clip]{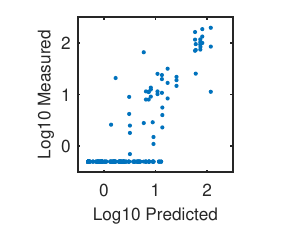}}\\
 \subfloat[Alternating Least Squares]{\includegraphics[trim=0.2in 0 0.23in 0, clip]{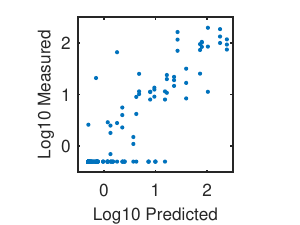}}\quad
 \subfloat[(Sparse) Partition Log-Linear]{\includegraphics[trim=0.2in 0 0.23in 0, clip]{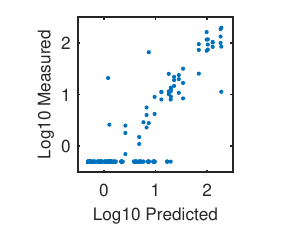}}
\caption{Comparison between predicted and measured violacein production levels.}
\label{fig:vio}
\end{figure}

Bioengineered metabolic pathways hold promise for the production of pharmaceuticals and transportation fuels, and they are constructed in a combinatorial fashion by varying different discrete design elements.  This combinatorial nature makes it challenging to engineer the pathway to maximize production of the bioproduct, and so one proposed idea is to (i) construct a model relating design parameters to the amount of bioproduct produced, and then (ii) use this model to determine which combination of design elements maximizes the bioproduct \cite{lee2013}.

Bioengineered pathways can be represented by a combinatorial regression model, and so it is instructive to apply tensor completion methods.  In the pathway studied in \cite{lee2013}, there are five predictors $p = 5$, and each predictor has five levels $r_j = 5$ for $j = 1,\ldots,5$.  The data is categorized into either a training data set or a validation data set, and each respective data set consists of different experiments with explicitly different predictor values (i.e., design elements) used for each; the validation data set was constructed to be a true validation data set for the original model in \cite{lee2013}.  

A comparison of predicted and measured values for models computed using different approaches is shown in Figure \ref{fig:vio}.   Sparse Partition Log-Linear is not separately shown because cross-validation chose $\lambda$ to make the model identical to Partition Log-Linear.  All models were constructed using data designated as the training set in \cite{lee2013}, and the predictions and measured values in Figure \ref{fig:vio} correspond to data designated as the validation set in \cite{lee2013}.  The equipment could not measure values smaller than 0.5, and so measured values and model predictions smaller than this were set to 0.5.  

The predictions of (Sparse) Partition Log-Linear most closely match the measured values.  Dummy Coding Linear (i.e., model in \cite{lee2013} using \cite{aswani2011}) and Alternating Least Squares perform less well, and Square Nuclear Norm and Maximum Nuclear Norm do not work well for this data.  Quantitatively, Spearman's rank correlation coefficient is interesting because we are interested in models that can predict the relative amount of bioproduct for a particular combinatorial design.  The Spearman correlation coefficient (for measurements above the minimum detectable level of 0.5) is 0.84 for (Sparse) Partition Log-Linear, 0.80 for Alternating Least Squares, 0.75 for Dummy Coding Linear, 0.17 for Maximum Nuclear Norm, and -0.26 for Square Nuclear Norm.  We also conducted a bias-corrected bootstrap hypothesis test \cite{efron1987} to determine whether the model fit improvement of (Sparse) Partition Log-Linear (as compared to a model computed by the other methods) was statistically significant: This hypothesis test returned (p=0.009) for Alternating Least Squares, (p\textless0.001) for Dummy Coding Linear, (p\textless0.001) for Maximum Nuclear Norm, and (p\textless0.001) for Square Nuclear Norm.  These results indicate the model fit improvement of (Sparse) Partition Log-Linear is statistically significant.

\section{Conclusion}

We defined a new decomposition for positive tensors, showed it can be computed in polynomial time using a randomized algorithm, and justified the design of this decomposition and its loss function by identifying important cases where they coincide with the usual tensor decomposition and squared loss function.  We extended this framework to tensor completion, and showed our approach has improved statistical performance in comparison to existing approaches.  We provided a novel interpretation of regression problems with categorical variables as tensor completion problems, and numerical examples with synthetic data and data from a bioengineered metabolic network displayed the improved performance of our approach.  Our current work includes application of our approach to regression with categorical variables on larger data sets, and we have found our approach computationally scales well in terms of the number of data points $n$, tensor order $p$, and dimension $r$.  However, scalability issues arise as the effective dimension $\rho(\Gamma) = \sum_{k=1}^m \prod_{j \in F_k} r_j$ (which has exponential size in terms of the cardinality of $F_k$) grows.  It may be interesting to develop specialized optimization algorithms to solve our formulation in such settings.

\section*{Acknowledgements}
The author thanks John E. Dueber and Michael E. Lee for providing the violacein data set \cite{lee2013}.

\bibliographystyle{siam}
\bibliography{pos_lowten}

\end{document}